\documentclass[11pt]{article}

\usepackage[a4paper,margin=1in]{geometry}
\usepackage{amsmath,amssymb,amsthm,mathtools}
\usepackage[hidelinks]{hyperref}
\usepackage{enumitem}
\usepackage{microtype}
\usepackage{booktabs}
\usepackage{array}
\usepackage{libertinus}
\newtheorem{theorem}{Theorem}[section]
\newtheorem{proposition}[theorem]{Proposition}
\newtheorem{lemma}[theorem]{Lemma}
\newtheorem{corollary}[theorem]{Corollary}

\newtheorem{definition}[theorem]{Definition}

\newcommand{\E}{\mathbb E}
\newcommand{\Z}{\mathbb Z}
\newcommand{\R}{\mathbb R}
\newcommand{\C}{\mathbb C}
\newcommand{\1}{\mathbf 1}
\newcommand{\dist}{\operatorname{dist}}
\newcommand{\Ldim}{\operatorname{Ldim}}
\newcommand{\eqc}{\operatorname{eq}}
\newcommand{\PerR}{\operatorname{Per}_{R}}
\newcommand{\Dred}{D^{\sharp}}

\title{Blocky Matrices and Group Idempotents}
\author{Gaia Carenini \footnote{Department of Pure Mathematics and Mathematical Statistics, University of Cambridge, Cambridge.}}
\date{\today}

\begin{document}
\maketitle

\begin{abstract}
We prove a common generalization of two structure theorems: the dimension-free decomposition theorem for idempotent Schur multipliers and the idempotent theorem in harmonic analysis. Roughly speaking, our result shows that an invariant integer-valued kernel with Hilbert-space factorization norm $\gamma$ admits a signed decomposition into at most $2^{O(\gamma^4)}$ elementary pieces. In the matrix setting these pieces are blocky matrices, while in the group setting they are indicators of cosets. In the locally compact abelian setting this quantitatively strengthens the theorem of Green and Sanders, while in the non-abelian setting it gives a quantitative strengthening of Host's idempotent theorem and, for finite groups, of Sanders's quantitative result. It also improves the exponent in the dimension-free matrix decomposition from $\gamma^6$ to $\gamma^4$. 
\end{abstract}

\section{Introduction}

\subsection{A common decomposition problem}

We begin with two pieces of mathematics that at first sight belong to
different subjects, with the aim of making clear why the same decomposition
principle should govern both.

The first comes from the study of Boolean matrices and communication
complexity. Call a $0/1$ matrix $B$, with rows indexed by a set $X$ and
columns by a set $Y$, \emph{blocky} if, after independently permuting its rows
and its columns, it becomes a block-diagonal matrix each of whose blocks is
entirely $1$s, together with some all-zero rows and columns left over. A
moment's thought shows that this is the same as saying that there is a
partition of (a subset of) $X$ into classes $X_\lambda$, a partition of (a
subset of) $Y$ into classes $Y_\lambda$ indexed by the \emph{same} label set,
and that $B(x,y)=1$ exactly when $x$ and $y$ have been placed in the same
class. In other words, if we define two labelling functions $\phi,\psi$,
sending each row and column either to its class label or to a dummy symbol
$\bot$ for the leftover rows and columns, then
$$
B(x,y) = 1 \iff \phi(x) = \psi(y) \in \Lambda .
$$
So a blocky matrix is nothing but an \emph{equality test}, dressed up by first
relabelling the rows and columns. This is a very simple idea, but it is worth
stating carefully, because it is about to reappear in a completely different
guise.

The second piece of mathematics comes from harmonic analysis. Let $G$ be a
group and let $H \le G$ be a subgroup, and consider a left coset $aH$. Its
indicator function, viewed through the translation-invariant kernel
$$
M(x,y):=\1_{aH}(x^{-1}y),
$$
satisfies
$$
\1_{aH}(x^{-1}y)=1
\iff x^{-1}y\in aH
\iff y\in xaH
\iff (xa)H=yH .
$$
Look at the right-hand side: it says that the coset $(xa)H$, viewed as a
function of $x$, equals the coset $yH$, viewed as a function of $y$. That is
exactly an equality test again, except that now the ``labels'' are cosets of
$H$ in the quotient set $G/H$, rather than an abstract set $\Lambda$ chosen ad
hoc.

So both a blocky matrix and a coset kernel are equality kernels; the only
difference is what plays the role of the label set, and, crucially, whether
there is a group acting compatibly on everything. This suggests that
\emph{both} of the structure theorems built from these atoms -- the
dimension-free theorem of Beke, Goh, Hatami, Jaffe and Naylor \cite{BGHJN}
for Schur multipliers, the quantitative idempotent theorem of Green and
Sanders in the abelian setting \cite{GS}, Host's non-abelian idempotent
theorem \cite{Host}, and Sanders's quantitative finite-group refinement
\cite{SandersNA} -- ought to be instances of one master statement: an integer-valued kernel
that is ``simple'' in a Hilbert-space sense (small factorization norm), and
that respects a group symmetry, should be expressible with few signed
equality kernels that respect the same symmetry. Proving exactly this
statement, with a clean quantitative bound, is the point of this paper.

The proof builds on the decomposition framework developed by Goh--Hatami
\cite{GH} and Beke--Goh--Hatami--Jaffe--Naylor \cite{BGHJN}. In particular,
the near-constancy argument and the local norm-decreasing split adapt key
ingredients from those works, while the subsequent binary recursion follows
the architecture of \cite{BGHJN}. The new ingredients are the martingale
estimate in Theorem~\ref{thm:martingale}, which improves the quantitative
exponent, and the equivariant orbitwise implementation, which yields the
non-abelian consequence.

\subsection{Equality kernels and the master theorem}

We now make the notion of equality kernel precise, since everything else in
the paper is built out of it.

\begin{definition}[Equality kernel]
Let $X$ and $Y$ be sets. A kernel $B:X\times Y \to \{0,1\}$ is an
\emph{equality kernel} if there exist a set $\Lambda$ (the label set) and maps
$$
\phi : X \to \Lambda \cup \{\bot\}, \qquad \psi : Y \to \Lambda \cup \{\bot\}
$$
such that for all $x\in X$ and $y \in Y$,
$$
B(x,y) = 1 \iff \phi(x) = \psi(y) \in \Lambda .
$$
If a group $\Gamma$ acts on $X$ and on $Y$, we say $B$ is \emph{$\Gamma$-equivariant}
if $B(gx,gy) = B(x,y)$ for every $g\in\Gamma$.
\end{definition}

The role of $\bot$ is simply bookkeeping: it lets a row or column be
``unmatched'', contributing an all-zero row or column to $B$, without forcing
us to throw it out of $X$ or $Y$.

We also need a way of measuring how complicated a kernel is, and this is
where Hilbert space enters. The relevant notion, standard in operator theory
and in the theory of Schur multipliers, is the factorization norm.

\begin{definition}[Factorization norm]
For a real kernel $A: X\times Y \to \R$, its \emph{$\gamma_2$-norm} is
$$
\|A\|_{\gamma_2} := \inf \Big\{ \big(\sup_x \|u_x\|\big)\big(\sup_y\|v_y\|\big) : A(x,y) = \langle u_x, v_y\rangle \text{ for some Hilbert space vectors } u_x, v_y \Big\}.
$$
A factorization realizing (or nearly realizing) this infimum with
$\sup_x \|u_x\| \le 1$ and $\sup_y \|v_y\| \le \gamma$ is called a
\emph{$\gamma$-factorization}.
\end{definition}

This is a genuinely flexible notion of complexity: it does not care how large
$X$ and $Y$ are, only about how many dimensions of ``genuine variation'' are
needed to represent $A$ as a Gram-type matrix. In particular it makes sense to
ask for a $\gamma_2$-factorization that is compatible with a group action, and
this compatibility is what turns the analytic statement into a structural
one.

\begin{definition}[Equivariant factorization]
Let a group $\Gamma$ act on sets $X$ and $Y$. An \emph{equivariant
$\gamma$-factorization} of a kernel $A: X\times Y \to \R$ consists of a real
Hilbert space $\mathcal H$, an orthogonal representation
$R:\Gamma \to O(\mathcal H)$, and vectors $u_x, v_y \in \mathcal H$ such that
$$
A(x,y) = \langle u_x, v_y\rangle, \qquad \|u_x\| \le 1, \qquad \|v_y\| \le \gamma,
$$
and the vectors transform correctly under the action:
$$
u_{gx} = R_g u_x, \qquad v_{gy} = R_g v_y \qquad \text{for all } g \in \Gamma.
$$
\end{definition}

The requirement $u_{gx}=R_gu_x$ is the natural thing to ask for: it says that
moving a row by the group action corresponds to applying a single fixed
orthogonal transformation to its associated vector, uniformly across the
whole space, rather than something that could vary erratically from orbit to
orbit.

We can now state the theorem around which the whole paper is organized.

\begin{theorem}[Equivariant integral-kernel decomposition]
\label{thm:master}
There is an absolute constant $C>0$ with the following property. Let a group
$\Gamma$ act \emph{freely} on sets $X$ and $Y$, and suppose that the column
orbit space $Y/\Gamma$ is finite. Let $K: X\times Y \to \Z$ be
diagonally $\Gamma$-invariant, meaning $K(gx,gy)=K(x,y)$ for all $g$, and
suppose $K$ has an equivariant $\gamma$-factorization. Then
$$
K = \sum_{i=1}^L \sigma_i B_i, \qquad L \le 2^{C\gamma^4},
$$
where each $\sigma_i \in \{-1,1\}$ and each $B_i$ is a $\Gamma$-equivariant
equality kernel.
\end{theorem}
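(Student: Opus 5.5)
We follow the architecture of \cite{GH,BGHJN}: a recursion on an integer-valued kernel with a $\gamma$-factorization, carried out equivariantly and orbit by orbit. The potential is $\sup_y\|v_y\|^2$, which starts at $\gamma^2$. Each round lowers it by a definite amount, or else splits the kernel into a bounded number of simpler pieces. First we reduce to the case $K:X\times Y\to\{0,1\}$, or more generally to kernels with entries in a bounded window. Since $|K(x,y)|\le\|u_x\|\|v_y\|\le\gamma$, the entries lie in $\{-\lfloor\gamma\rfloor,\dots,\lfloor\gamma\rfloor\}$. We can write $K=\sum_{t\ge1}\1[K\ge t]-\sum_{t\ge1}\1[K\le -t]$, but this layer decomposition does not preserve the $\gamma_2$ bound. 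So we keep $K$ integer-valued throughout the recursion and only extract equality kernels at the leaves.

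\textbf{Step 1 (near-constancy).} Fix a row $x$. For each column orbit $\Gamma y$, look at the unit-ish vectors $v_{gy}=R_gv_y$. Take a pair $(x,y)$ and consider the level sets $\{y':K(x,y')=K(x,y)\}$. The aim is to show the following dichotomy. Either $v_{y'}$ is nearly constant, up to $O(1/\gamma)$ in the sense that $\langle u_x,v_{y'}\rangle$ is determined, on large structured sets; or we can split $u_x=u_x'+u_x''$ and $v_y=v_y'+v_y''$ into two kernels $K=K'+K''$. Each of $K',K''$ must again be integer-valued, and one of its factorization norms must drop by a constant amount. This is the local norm-decreasing split of \cite{BGHJN}.

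To keep it equivariant we do not choose these splits pointwise. Instead we choose them once on a set of orbit representatives $y_1,\dots,y_m$ for the finitely many orbits in $Y/\Gamma$. Because the action is free, we then transport the choice by setting $v'_{gy_j}:=R_gv'_{y_j}$. The rows are handled the same way, using freeness on $X$. Invariance of $K$ guarantees that the pieces $K',K''$ are again diagonally invariant.

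\textbf{Step 2 (martingale estimate).} This is where the exponent is decided. We use Theorem~\ref{thm:martingale} to control the energy increment. Refining the partition of columns, orbitwise, into classes on which $K(x,\cdot)$ is constant gives a martingale $\E[v\mid\mathcal F_k]$. The orthogonality of its increments bounds the number of nontrivial refinement steps by $O(\gamma^2\cdot\gamma^2)=O(\gamma^4)$: each step gains at least $\gamma^{-2}$ in squared norm, out of a total of $\gamma^2$. This replaces the cruder $\gamma^6$ count. The binary recursion then has depth $O(\gamma^4)$ and produces at most $2^{O(\gamma^4)}$ leaves.

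\textbf{Step 3 (leaves).} At a leaf the kernel is \emph{locally constant}: there are equivariant partitions of the rows and columns into classes, and $K$ is constant on matched pairs of classes. Such a kernel equals a signed sum of boundedly many equivariant equality kernels, each given by $\phi(x)=$ class of $x$ and $\psi(y)=$ class of $y$, with $\Gamma$ acting on the label set. Summing over the leaves with signs gives $L\le 2^{C\gamma^4}$.

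The main obstacle is Step 2. We must show that each split costs only one factor of $2$ in the count while the potential drops by $\Omega(\gamma^{-2})$. We must also show that the martingale filtration can be chosen $\Gamma$-invariantly. Freeness together with finiteness of $Y/\Gamma$ is exactly what lets us define the filtration on the finitely many orbit representatives and transport it. Without this, conditional expectations would not commute with $R_g$.
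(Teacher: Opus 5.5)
There is a genuine gap. The dichotomy in your Step 1 is asserted rather than proved, and Step 3 has no mechanism behind it. The argument needs a stopping quantity tied to two lemmas. In the paper this quantity is the reduced row mass $\Dred$, the row sums after identifying duplicate rounded columns.

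If $\Dred\le T(\gamma,\epsilon)$, Lemma~\ref{lem:terminal-eq} writes the kernel as at most $2\Dred$ equivariant equality kernels. It does this by greedily subtracting one column class per row, chosen on row-orbit representatives. This is the only place freeness on $X$ enters; rows are never split.

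If $\Dred>T(\gamma,\epsilon)$, the paper proceeds as follows:
\begin{enumerate}
\item Take a heavy row $x_0$ and a large level set $S$ on which the rounded value in row $x_0$ is a fixed $b\ne0$.
\item Shrink $S$, by Corollary~\ref{cor:near}, to a set $S'$ that is near-constant in \emph{every} row.
\item Split $v_{y_0}=(v_{y_0}-\widehat v)+\widehat v$, where $\widehat v$ is the average of $v_y$ over $S'$.
\end{enumerate}
Then $|b|\ge1$ gives $\|\widehat v\|>1/2$, so Lemma~\ref{lem:average} bounds the first piece. Pairwise distinctness of the rounded columns gives $\|\widehat v\|^2\le\gamma^2-1/16$. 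Your ``locally constant leaves with boundedly many equality kernels'' has no counterpart of $\Dred$ or of this split lemma. So neither when to stop nor how much a leaf costs is controlled.

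Two further claims in Step 1 fail as stated.
\begin{itemize}
\item The children cannot be kept exactly integer-valued. $\langle u_x,\widehat v\rangle$ averages entries that agree only on a $(1-\eta)$-fraction of $S'$, so it is in general only $2\epsilon$-close to an integer. You must carry $\epsilon$-almost integer kernels. The error roughly triples at each level, which forces the starting error to be $\epsilon_0=2^{-O(\gamma^2)}$.
\item Splitting both $u_x$ and $v_y$ does not give $K=K'+K''$, because of the cross terms. Only column vectors should be split.
\end{itemize}

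Your Step 2 misidentifies both the martingale and the source of the exponent. The energy-increment martingale $\E[v\mid\mathcal F_k]$ over column partitions, with a gain of $\gamma^{-2}$ per step, is not established; you flag it yourself as the main obstacle. It is also not what drives the bound.

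In the paper the martingale is $\sum_t\varepsilon_tu_{x_t}$ along a uniformly random path of a weighted mistake tree. Predictability of $x_t$ and $w_t$ kills both the cross terms and the threshold terms, giving $\Ldim_\alpha(A)\le4\gamma^2/\alpha^2$. Proposition~\ref{prop:near} converts this into a near-constant set of relative size $(\epsilon/\gamma^2)^{O(\gamma^2)}$.

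The norm drop per split is the absolute constant $1/16$. So the recursion has depth at most $16\gamma^2$ and only $2^{O(\gamma^2)}$ nodes. The $\gamma^4$ comes instead from the per-node cost
$\log T(\gamma,\epsilon_0)=O(\gamma^2\log(\gamma^2/\epsilon_0))=O(\gamma^4)$.
That is the $O(\gamma^2)$ Littlestone bound times $\log(1/\epsilon_0)=O(\gamma^2)$ from error accumulation.

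Two smaller points. Equivariance is obtained by transporting a single split $w_1,w_2$ at $y_0$ around its free orbit via $v^{(j)}_{gy_0}=R_gw_j$, not by choosing a $\Gamma$-invariant filtration. For infinite $X$ you also need the weak-compactness passage from finite row sets, since the local split is a statement about finite matrices.
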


The roles of the freeness and finiteness hypotheses are as follows. Freeness on the row side is used in the terminal equality
decomposition, where choices made on row-orbit representatives are transported
equivariantly. Freeness on the column side is used in the splitting step,
where a split constructed at one column is transported uniquely around its
orbit. Finiteness is needed only for the column orbit space $Y/\Gamma$, so
that the orbit-processing loop terminates after finitely many steps. No
finiteness assumption on $X/\Gamma$ is used.

The two motivating applications are obtained by specializing the group
action, as we now explain.

\subsection{The trivial action: matrices}

If $\Gamma$ is the trivial group, then freeness is automatic and the column
orbit space is just $Y$, so the only finiteness hypothesis in the master
theorem is that $Y$ be finite. In the finite-matrix application both $X$ and
$Y$ are of course finite, and $\Gamma$-invariance is no constraint at all.
Moreover a $\Gamma$-equivariant equality kernel is simply
an equality kernel, which by the discussion above is the same thing as a
blocky matrix. So Theorem~\ref{thm:master} immediately specializes to a
theorem purely about matrices.

\begin{corollary}[Integer matrices]
\label{cor:matrix}
There is an absolute constant $C>0$ such that every finite integer matrix $A$
with $\|A\|_{\gamma_2}\le \gamma$ can be written
$$
A = \sum_{i=1}^L \sigma_i B_i, \qquad L \le 2^{C\gamma^4},
$$
where each $B_i$ is blocky and $\sigma_i \in \{-1,1\}$. In particular this
applies to every Boolean matrix of bounded Schur-multiplier norm.
\end{corollary}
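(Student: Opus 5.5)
The corollary should be Theorem~\ref{thm:master} specialized to the trivial group $\Gamma=\{e\}$, acting on $X$ and $Y$, the row and column index sets of $A$. The plan is to check the hypotheses one by one, turn the conclusion back into the language of blocky matrices, and then handle the phrase ``nearly realizing'' in the definition of the factorization norm.

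\emph{Hypotheses.} The trivial group acts freely on any set. The column orbit space $Y/\Gamma$ is just $Y$, which is finite because $A$ is a finite matrix. Diagonal $\Gamma$-invariance $A(gx,gy)=A(x,y)$ is vacuous when $g=e$. It remains to produce an equivariant $\gamma$-factorization, meaning a Hilbert space $\mathcal H$, an orthogonal representation $R$ of $\{e\}$ on $\mathcal H$, and vectors $u_x,v_y$ with $A(x,y)=\langle u_x,v_y\rangle$, $\|u_x\|\le 1$ and $\|v_y\|\le\gamma$. We take $R_e=\mathrm{id}$, so the equivariance conditions $u_{ex}=u_x$ and $v_{ey}=v_y$ hold automatically.

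\emph{Producing the factorization.} Suppose $\|A\|_{\gamma_2}\le\gamma$. For finite $X$ and $Y$ the infimum in the definition is attained, by a compactness argument: restrict to $\R^{|X|+|Y|}$ and to normalized factorizations, which form a compact set. This gives $(\sup_x\|u_x\|)(\sup_y\|v_y\|)\le\gamma$. We then rescale, replacing $u_x$ by $u_x/s$ and $v_y$ by $s\,v_y$ where $s=\sup_x\|u_x\|$, so that $\|u_x\|\le1$ and $\|v_y\|\le\gamma$. If $s=0$ then $A=0$, and $L=0$ works.

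If one prefers not to argue attainment, there is a fallback. For every $\varepsilon>0$ we get a $(\gamma+\varepsilon)$-factorization, and Theorem~\ref{thm:master} then gives at most $2^{C(\gamma+\varepsilon)^4}$ pieces. Letting $\varepsilon\to0$ is legitimate because $L$ is an integer: the bound $L\le 2^{C(\gamma+\varepsilon)^4}$ for all $\varepsilon>0$ yields $L\le 2^{C\gamma^4}$ for some decomposition. This works because there are only finitely many candidate values of $L$ below a fixed bound, so a single value is achieved along a sequence of $\varepsilon$ tending to $0$. If the $\gamma\to\gamma+\varepsilon$ slack is considered unclean, we can instead absorb it by slightly enlarging $C$.

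\emph{Translating the conclusion.} Theorem~\ref{thm:master} gives $A=\sum_{i\le L}\sigma_iB_i$ with each $B_i$ a $\Gamma$-equivariant equality kernel, which for trivial $\Gamma$ is just an equality kernel. As explained in the introduction, an equality kernel given by $\phi,\psi$ is blocky. To see this, permute the rows so that the classes $\phi^{-1}(\lambda)$ are contiguous, and the columns so that the classes $\psi^{-1}(\lambda)$ are contiguous, listing the labels in the same order on both sides. Put the rows in $\phi^{-1}(\bot)$, the columns in $\psi^{-1}(\bot)$, and any class matched on only one side at the end. The result is a block diagonal of all-ones blocks plus zero rows and columns.

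\emph{The Boolean case.} For the final sentence, note that a Boolean matrix is an integer matrix. Its Schur multiplier norm equals its $\gamma_2$ norm, by Grothendieck's classical characterization of Schur multipliers. So a Boolean matrix of Schur multiplier norm at most $\gamma$ satisfies the hypothesis of the corollary.

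The only mildly delicate step is the attainment issue, and we would handle it with the compactness argument above.
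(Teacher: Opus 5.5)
Your proposal is correct and follows the paper's route: specialize Theorem~\ref{thm:master} to the trivial group, where freeness, diagonal invariance and orbit-finiteness hold automatically, and use Proposition~\ref{prop:block-eq} to identify equality kernels with blocky matrices. Your extra care about attaining the $\gamma_2$ infimum fills in a detail the paper passes over silently. Both of your versions of that step are sound: compactness after reducing to finite dimension, or the integer-valued limiting argument as $\varepsilon\to0$.
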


This strengthens the dimension-free theorem of Beke, Goh, Hatami, Jaffe and
Naylor \cite{BGHJN}, whose bound has $\gamma^6$ in the exponent rather than
$\gamma^4$; we explain in Section~\ref{sec:matrix} exactly where the saving
comes from.

\subsection{The regular action: groups}

At the opposite extreme, let $X = Y = G$ for a locally compact group $G$, and
let $G$ act on itself by left translation, $g\cdot x = gx$. This action is
free (if $gx=x$ then $g$ is the identity) and has a single orbit, so the
orbit-space finiteness hypothesis is satisfied trivially, however large or
infinite $G$ may be. An element of the Fourier--Stieltjes algebra $B(G)$ --
recall that this consists of the coefficient functions
$f(g)=\langle \pi(g)\xi,\eta\rangle$ of strongly continuous unitary
representations $\pi$ of $G$, with $\|f\|_{B(G)}$ the infimum of
$\|\xi\|\|\eta\|$ over such representations -- gives rise to exactly the kind
of equivariant Hilbert factorization that Theorem~\ref{thm:master} wants, once
one unwinds the definitions (we carry this out in
Section~\ref{sec:group}). The extra piece of input needed on top of the
abstract theorem is topological: because coefficient functions of continuous
representations are uniformly continuous, the subgroups that appear at the
end of the recursion are automatically open. This gives:

\begin{theorem}[Quantitative non-abelian idempotent theorem]
\label{thm:group}
There is an absolute constant $C>0$ with the following property. Let $G$ be a
locally compact group and let $f\in B(G)$ be integer-valued with
$\|f\|_{B(G)} \le \gamma$. Then there exist open subgroups $H_1,\dots,H_L \le
G$, elements $a_1,\dots,a_L \in G$, and signs $\sigma_i\in\{-1,1\}$, with
repetitions allowed, such that
$$
f = \sum_{i=1}^L \sigma_i \1_{a_iH_i}, \qquad L \le 2^{C\gamma^4}.
$$
\end{theorem}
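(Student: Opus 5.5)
We will deduce Theorem~\ref{thm:group} from Theorem~\ref{thm:master}, applied with $X=Y=\Gamma=G$ and $G$ acting on itself by left translation. This action is free and has a single orbit, so $Y/\Gamma$ is a point and both hypotheses of the master theorem hold.

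The first step is to build the kernel and its equivariant factorization. Since $f\in B(G)$ with $\|f\|_{B(G)}\le\gamma$, we can write $f(g)=\langle \pi(g)\xi,\eta\rangle$ with $\|\xi\|\|\eta\|\le\gamma'$ for any $\gamma'>\gamma$. Rescaling, we take $\|\xi\|\le 1$ and $\|\eta\|\le\gamma'$. Set
$$K(x,y):=f(x^{-1}y)=\langle \pi(y)\xi,\pi(x)\eta\rangle .$$
This kernel is diagonally invariant and integer-valued. To obtain a real factorization with an orthogonal representation, we pass to the realification $\mathcal H_\R$ of the complex Hilbert space: $\pi$ becomes an orthogonal representation $R$, and $\operatorname{Re}\langle\cdot,\cdot\rangle$ is the real inner product. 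Since $K$ is real, $K(x,y)=\operatorname{Re}\langle \pi(x)\eta,\pi(y)\xi\rangle$. We put $u_x:=\pi(x)\eta/\gamma'$ and $v_y:=\gamma'\pi(y)\xi$, after rebalancing norms so that $\|u_x\|\le 1$ and $\|v_y\|\le\gamma'$. These satisfy $u_{gx}=R_gu_x$ and $v_{gy}=R_gv_y$. The factor $\gamma'$ only affects the constant, so taking $\gamma'\le 2\gamma$ (or $\gamma'\to\gamma$; when $\gamma<1$ an integer-valued $f$ is forced to be $0$ anyway) costs nothing. Theorem~\ref{thm:master} then gives $K=\sum_{i\le L}\sigma_iB_i$ with $L\le 2^{C\gamma^4}$ and each $B_i$ a $G$-equivariant equality kernel.

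The second step identifies $G$-equivariant equality kernels with coset kernels. Let $B$ be such a kernel and set $b(g):=B(e,g)$, so that $B(x,y)=b(x^{-1}y)$. If $b\neq0$, then $\phi(e)=\lambda\in\Lambda$. Let
$$S:=\{g:\psi(g)=\lambda\},\qquad H:=\{h: \phi(h)=\lambda\}.$$
Then $b=\1_S$ and $B(h,g)=1$ iff $\phi(h)=\psi(g)$. Equivariance gives $b(h^{-1}g)=B(h,g)$, so for $h\in H$ we have $b(h^{-1}g)=\1_S(g)$, i.e.\ $hS=S$. Conversely, suppose $h^{-1}S\cap S\neq\emptyset$, say $h^{-1}g\in S$ and $g\in S$. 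Then $\phi(h)=\psi(g)=\lambda$ via $B(h,g)=1$, so $h\in H$. Hence the left stabilizer $H'=\{h:hS=S\}$ is a subgroup, $S$ is a union of right cosets of $H'$, and any two points $s,s'\in S$ satisfy $s's^{-1}\in H'$. So $S=H's$ is a single right coset, which we can rewrite as a left coset $s(s^{-1}H's)$. Evaluating at $x=e$ then gives $f=\sum\sigma_i\1_{a_iH_i}$ with subgroups $H_i$.

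The last step, openness, is the one we expect to need care. The decomposition just obtained need not use only open subgroups, since terms could cancel. First we discard every pair of identical terms with opposite signs. We then argue that each remaining $a_iH_i$ is open. The kernel $B_i$ built in the proof of the master theorem is obtained by a recursion from the vectors $u_x$. Since $x\mapsto\pi(x)\eta$ is norm-continuous, points close to $e$ have nearly equal vectors and so receive the same labels at every step of the recursion. Thus $H_i$ contains a neighbourhood of $e$ and is open.

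If one prefers to avoid reopening the proof of the master theorem, there is a cleaner alternative. Each $\1_{aH}$ is a function on $G$, and $f$ is continuous and integer-valued, hence locally constant. Replacing each $H_i$ by $H_i\cap U$, where $U$ is an open subgroup on which $f$ is invariant, does not obviously preserve the identity, however. So the continuity route through the recursion is our main plan, and verifying label-stability under small perturbations is the main obstacle.
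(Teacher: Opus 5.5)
Your first two steps are sound. The vectors $u_x=\pi(x)\eta/\gamma'$, $v_y=\gamma'\pi(y)\xi$ on the realification give an equivariant factorization with the right norms. Your stabilizer argument correctly identifies a nonzero $G$-equivariant equality kernel with $\1_S(x^{-1}y)$ for a coset $S$.

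The gap is openness, which you flag as the main obstacle but do not establish. No after-the-fact cleanup can supply it. For example, $\1_G=\1_H+\1_{aH}$, with $H$ a non-open index-two subgroup of $(\Z/2\Z)^{\mathbb N}$, has no cancelling pairs. So discarding pairs is irrelevant, and openness must come out of the construction itself.

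The mechanism you propose is not what the construction does. The recursion never modifies the row vectors; it splits column vectors. Equality pieces appear only at terminal nodes, where the row label is fixed by transport, $C_x=xC_e$, with $C_e=aH$ and $H=\PerR(k)$ for the terminal rounded coefficient $k$. Closeness of $u_h$ to $u_e$ only gives equal rounded rows, i.e.\ $h$ lies in the \emph{left} period group of $k$. By contrast, $C_h=C_e$ means $h\in aHa^{-1}$, which is a condition on the \emph{right} period group. These differ in non-abelian groups. In $S_3$ with $P=\{e,(12)\}$ and $k=\1_P+\1_{(13)P}$, the rows of $e$ and $(13)$ coincide, yet choosing $C_e=P$ gives $C_{(13)}=(13)P\neq C_e$. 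So ``nearby points receive the same labels'' does not follow as stated.

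The missing argument, which is the paper's, runs on the column side, in three steps.
\begin{enumerate}
\item The transport in Lemma~\ref{lem:orbit-split} gives $v^{(j)}_y=R_{yy_0^{-1}}w_j=R_y(R_{y_0^{-1}}w_j)$. So at every node the column vectors have the form $R_yw$ for a single vector $w$, even though $w$ arose as a weak limit. The node kernel is therefore $g(x^{-1}y)$, with $g(z)=\langle u_e,R_zw\rangle$ a coefficient of the same strongly continuous representation.
\item There is a single column orbit, so Lemma~\ref{lem:eq-step} either splits it entirely (leaving $F=0$) or does nothing. Hence equality pieces come only from terminal nodes.
\item At a terminal node, with $k=g_\Z$, the column classes of $k(x^{-1}y)$ are the left cosets of $H=\PerR(k)$ (Lemma~\ref{lem:dup-coset}). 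This gives $k=\sum_{aH}k(a)\1_{aH}$ with $\Dred(M_k)=\sum_{aH}|k(a)|$ signed terms. The subgroup $H$ is open because $|g(zh)-g(z)|\le\|u_e\|\,\|R_hw-w\|$ is uniformly small for $h$ near $e$, while $g$ is $\delta$-almost integer-valued with $\delta<1/2$ (Lemma~\ref{lem:period-open}).
\end{enumerate}
The continuity that matters is that of $h\mapsto R_hw$ for the child column vector, not that of $x\mapsto\pi(x)\eta$.
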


The passage from the matrix theorem to Theorem~\ref{thm:group} is not formal.
The decomposition in Beke--Goh--Hatami--Jaffe--Naylor \cite{BGHJN} produces
arbitrary blocky matrices; even when the input has the translation-invariant
form $M_f(x,y)=f(x^{-1}y)$, the individual summands need not retain that form,
so a coset decomposition does not follow directly. The additional structural
ingredient here is an \emph{equivariant recursion}: a local split of one
column is transported around its entire group orbit using the underlying
orthogonal representation, so every child kernel remains equivariant. For the
regular action this means that every child still has the form
$A(x,y)=g(x^{-1}y)$ for a coefficient function $g$. At termination, the
rounded coefficient function is constant on cosets of its period subgroup, and
strong continuity forces that subgroup to be open. This structural
ingredient is logically distinct from the martingale estimate: equivariance is
what makes the non-abelian consequence possible, while the improved weighted
Littlestone bound is what sharpens the quantitative exponent from six to four.

In the Boolean special case for finite groups, this replaces the tower-type
bound in Sanders's quantitative non-abelian theorem \cite{SandersNA} by a
single exponential. Sanders's explicit quantitative theorem is stated for
finite groups; the present conclusion gives a uniform single-exponential
bound for arbitrary locally compact groups.

Sanders also gave a related structural description in terms of coset decision
trees: if $G$ is finite and $f:G\to\{0,1\}$ has
$\|f\|_{A(G)}\le M$, then $f$ is computed by a coset decision tree with at
most $\exp(\exp(\exp(O(M^2))))$ leaves \cite{SandersCDT}; we compare this
with our coset-decomposition statement in Section~\ref{sec:comparison}.

Restricting to a locally compact \emph{abelian} group $K$ and taking Fourier
transforms turns Theorem~\ref{thm:group} into the classical quantitative
idempotent theorem.

\begin{corollary}[Abelian quantitative idempotent theorem]
\label{cor:abelian}
Let $K$ be a locally compact abelian group and $\mu \in M(K)$ an idempotent
measure. Then
$$
\widehat\mu = \sum_{i=1}^L \sigma_i \1_{\gamma_i + \Gamma_i}, \qquad L \le 2^{O(\|\mu\|^4)},
$$
where each $\Gamma_i$ is an open subgroup of $\widehat K$.
\end{corollary}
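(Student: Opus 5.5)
We deduce Corollary~\ref{cor:abelian} from Theorem~\ref{thm:group} applied to the dual group $G=\widehat K$, which is locally compact abelian.

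\textbf{Step 1: $\widehat\mu$ lies in $B(\widehat K)$ with controlled norm.} By Bochner/Eberlein duality, $B(\widehat K)$ is exactly the algebra of Fourier--Stieltjes transforms of measures in $M(K)$, with $\|\widehat\mu\|_{B(\widehat K)}=\|\mu\|$. Concretely, take the unitary representation $\pi$ of $\widehat K$ on $L^2(|\mu|)$ given by $(\pi(\chi)\xi)(x)=\overline{\chi(x)}\xi(x)$, which is strongly continuous. Write $d\mu=h\,d|\mu|$ with $|h|=1$, and set $\xi=h|\mu|$-a.e.\ version, $\eta=1$, both in $L^2(|\mu|)$. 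Then $\widehat\mu(\chi)=\langle\pi(\chi)\xi,\eta\rangle$ and $\|\xi\|\|\eta\|=|\mu|(K)=\|\mu\|$. So $\|\widehat\mu\|_{B(\widehat K)}\le\|\mu\|$.

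\textbf{Step 2: $\widehat\mu$ is $\{0,1\}$-valued.} Idempotence $\mu*\mu=\mu$ gives $\widehat\mu^2=\widehat\mu$ pointwise, since the Fourier--Stieltjes transform turns convolution into multiplication. Hence $\widehat\mu$ takes values in $\{0,1\}\subset\Z$.

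\textbf{Step 3: apply Theorem~\ref{thm:group}.} With $\gamma=\|\mu\|$, we obtain open subgroups $\Gamma_i\le\widehat K$, elements $\gamma_i\in\widehat K$ and signs $\sigma_i$ with
\[
\widehat\mu=\sum_{i=1}^L\sigma_i\1_{\gamma_i+\Gamma_i},\qquad L\le 2^{C\|\mu\|^4}.
\]
Left cosets $a_iH_i$ are written additively as $\gamma_i+\Gamma_i$ because the group is abelian.

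\textbf{Main obstacle.} The only genuine subtlety is Step 1: checking that the representation is strongly continuous and that the norm bound is $\|\mu\|$ rather than some constant multiple of it. Continuity follows from dominated convergence, since characters are continuous and the pairing $K\times\widehat K\to\mathbb T$ is jointly continuous on compacta, and $|\mu|$ is a finite Radon measure. A different norm normalization would only change the implicit constant in $O(\|\mu\|^4)$, so nothing else needs checking.
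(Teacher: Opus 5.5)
Your proposal is correct and follows the paper's route: $\mu*\mu=\mu$ makes $\widehat\mu$ Boolean-valued, you bound $\|\widehat\mu\|_{B(\widehat K)}\le\|\mu\|$, and you apply Theorem~\ref{thm:group} on $\widehat K$ with cosets written additively. The one difference is that you check the norm bound with an explicit multiplication representation on $L^2(|\mu|)$ instead of citing the isometric identification $B(\widehat K)\cong M(K)$; for its strong continuity, use uniform convergence of characters on compacta plus tightness of the finite measure $|\xi|^2\,d|\mu|$, because dominated convergence does not apply to nets when $\widehat K$ is not first countable.
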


Green and Sanders originally proved the bound
$L\le\exp\exp(C\|\mu\|^4)$ \cite{GS}. Sanders later summarized the
finite-abelian bound as $\exp(M^{4+o(1)})$ \cite{SandersA}. Thus an
$\exp(O(M^4))$ estimate removes the $M^{o(1)}$ loss in the exponent. On
$\mathbb F_2^n$, Sanders's $\exp(M^{3+o(1)})$ bound \cite{SandersF2}
remains sharper. In addition, the same argument gives a clean single-exponential bound
simultaneously for arbitrary integer matrices and for arbitrary locally
compact groups, abelian or not.

\subsection{Proof strategy}
\label{subsec:picture}

Before beginning the proof, we describe the role of its main ingredients. The
argument has several moving parts, and the following overview will be useful
when they are assembled in the recursion.

Think of $K$ as a huge integer-valued table, with a Hilbert-space vector
$v_y$ of norm at most $\gamma$ attached to each column and a unit vector $u_x$
attached to each row, so that $K(x,y)=\langle u_x,v_y\rangle$. The problem is
that we want to find columns with \emph{exactly} equal entries -- that is what
an equality kernel needs -- while all we are handed is a Euclidean picture, in
which ``closeness'' rather than ``equality'' is the natural notion. So the
argument has to do two things: it has to convert closeness into equality (by
rounding), and it has to find enough closeness to begin with (this is where
the Hilbert-space hypothesis is used).

The proof has four steps.

\begin{enumerate}[label=(\arabic*)]
\item \textbf{A dimension bound for a combinatorial game.} We first show that
if $A$ has a $\gamma$-factorization, then a certain adaptive
guessing game that one can play against the columns of $A$ -- formalized as a
weighted version of Littlestone dimension -- cannot last for more than
roughly $\gamma^2$ rounds. This is Theorem~\ref{thm:martingale}, and it is
proved by a short martingale computation: along a random play of the game,
the ``mistakes'' made by an adversary correspond to increments of a
random walk in Hilbert space, and predictability of the game (the row played
at each round cannot depend on the future) is exactly the condition that
makes the increments orthogonal in expectation, so that the total squared
displacement after $d$ rounds is only $d$, not $d^2$.

\item \textbf{From a dimension bound to near-constant columns.} A finite
bound on this combinatorial dimension is then turned, by an induction and
pigeonhole argument in the spirit of Sauer--Shelah type proofs, into the
existence of a \emph{large} set of columns on which every row is nearly
constant. This is
Proposition~\ref{prop:near} and Corollary~\ref{cor:near}.

\item \textbf{Averaging produces a norm-decreasing split.} Given such a large
near-constant family of columns, we average their Hilbert-space vectors. The
common nonzero rounded value in one row forces the average vector to have
nontrivial norm, and Lemma~\ref{lem:average} then makes one difference
$v_{y_0}-\widehat v$ smaller than the ambient norm bound. Independently,
pairwise separation of the distinct rounded columns and the bias--variance
identity force $\widehat v$ itself to be smaller than the ambient bound.
Thus $v_{y_0}=(v_{y_0}-\widehat v)+\widehat v$ is a split into two pieces of
strictly smaller norm, both still nearly integer-valued against every row. This is
Lemma~\ref{lem:local-split}, the real combinatorial heart of the paper.

\item \textbf{Recursing, and stopping.} We keep applying step (3), which
strictly decreases a squared-norm potential each time, so it can only happen
boundedly many times (at most $O(\gamma^2)$, since each split costs at least
$1/16$). Eventually either the potential is exhausted, or the reduced row mass
$\Dred$ falls below the splitting threshold. At that point we stop and use a
greedy equivariant subtraction argument: each subtraction removes one unit of
reduced row mass from every nonzero row, so an integer kernel of reduced row
mass $D$ is a signed sum of at most $2D$ equality kernels
(Lemma~\ref{lem:terminal-eq}).
\end{enumerate}

To implement the equivariance in step (3), we do not split columns
independently.  A split constructed at one representative $y_0$ is transported
around the whole orbit by the representation $R$.  Freeness of the action on
$Y$ makes this unambiguous, since every point in the orbit has a unique form
$gy_0$.  For the trivial action the orbits are singletons, while for the
regular action of $G$ there is only one orbit, so the transported split is
already a global convolution split.  Section~\ref{sec:equivariant} carries out
this orbitwise construction precisely.

The rest of the paper carries out steps (1)--(4) in order (Sections
\ref{sec:equality}--\ref{sec:local}), assembles them into the equivariant
recursion that proves Theorem~\ref{thm:master} (Section
\ref{sec:equivariant}), and then specializes to matrices
(Section~\ref{sec:matrix}) and to groups (Section~\ref{sec:group}), before
closing with a comparison to earlier quantitative bounds in the matrix and
abelian/non-abelian settings (Sections~\ref{sec:matrix} and~\ref{sec:comparison}).

\section{Equality kernels and reduced row mass}
\label{sec:equality}

We start with the easy, purely combinatorial half of the story: understanding
equality kernels themselves, and setting up the bookkeeping device --
\emph{reduced row mass} -- that will tell us, later, when to stop splitting
and simply read off the answer.

\subsection{Equality kernels are exactly blocky matrices}

We promised in the introduction that equality kernels and blocky matrices are
the same notion; we now confirm this formally, since it is used at once
in Corollary~\ref{cor:matrix}.

\begin{proposition}
\label{prop:block-eq}
A Boolean matrix is blocky if and only if it is an equality kernel.
\end{proposition}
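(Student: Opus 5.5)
We prove the two implications separately, working directly from the definitions. Throughout, a Boolean matrix $B$ has rows indexed by $X$ and columns indexed by $Y$. We use the characterization of blockiness from the introduction: $B$ is blocky if, after independently permuting rows and columns, it becomes block diagonal with all-ones blocks, plus possibly some all-zero rows and columns. The only point needing care is to formalize this informal description as a statement about partitions. We do that first, and the two directions then follow quickly.

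\emph{Blocky implies equality kernel.} Suppose permutations bring $B$ to block-diagonal form with all-ones blocks $X_\lambda\times Y_\lambda$, indexed by $\lambda\in\Lambda$. Here the $X_\lambda$ are pairwise disjoint subsets of $X$, the $Y_\lambda$ are pairwise disjoint subsets of $Y$, and both are nonempty. Block-diagonality says exactly that $B(x,y)=1$ if and only if $x\in X_\lambda$ and $y\in Y_\lambda$ for the same $\lambda$. Define $\phi(x)=\lambda$ when $x\in X_\lambda$, and $\phi(x)=\bot$ when $x$ lies in no block (the leftover zero rows). Define $\psi$ on $Y$ in the same way. Disjointness makes $\phi$ and $\psi$ well defined. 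By construction, $\phi(x)=\psi(y)\in\Lambda$ holds exactly when $x$ and $y$ lie in a common block, which is exactly when $B(x,y)=1$.

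\emph{Equality kernel implies blocky.} Let $\phi,\psi$ witness the equality-kernel property. Restrict attention to the labels actually realised on both sides, $\Lambda'=\phi(X)\cap\psi(Y)\cap\Lambda$. For each $\lambda\in\Lambda'$ set $X_\lambda=\phi^{-1}(\lambda)$ and $Y_\lambda=\psi^{-1}(\lambda)$; both are nonempty, and they are disjoint for distinct labels. Now take any row $x$ with $\phi(x)\notin\Lambda'$: either $\phi(x)=\bot$, or $\phi(x)$ is a label that no column receives. In both cases the defining equivalence shows that the row of $x$ is identically zero. The same argument shows that every column with $\psi(y)\notin\Lambda'$ is identically zero. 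Choose orderings of $X$ and $Y$ that list the classes $X_\lambda$ and $Y_\lambda$ consecutively, in the same order of $\lambda$, and put the zero rows and zero columns last. In this ordering the matrix is block diagonal: it equals $1$ on each $X_\lambda\times Y_\lambda$ and $0$ everywhere else, since $B(x,y)=1$ forces $\phi(x)=\psi(y)=\lambda$ for a single $\lambda$. This is exactly the blocky form.

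The key step is discarding labels used on only one side. Without that step, a class $X_\lambda$ would have no matching $Y_\lambda$, and the resulting "blocks" would be empty and degenerate. Once such labels are treated as zero rows or columns, nothing in either direction remains beyond unwinding definitions. If $X$ or $Y$ is infinite, the "permutation" language should be read as the partition statement itself; the argument is unchanged in that case.
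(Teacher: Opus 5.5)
Your proof is correct and follows the paper's argument: in both directions you identify the blocks $X_\lambda\times Y_\lambda$ with the fibres $\phi^{-1}(\lambda)\times\psi^{-1}(\lambda)$, and you send uncovered rows and columns to $\bot$. Your extra step of discarding labels realised on only one side is a harmless refinement that the paper omits; in the paper's version such labels simply give empty blocks, and the corresponding rows or columns are zero.
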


\begin{proof}
Suppose $B$ is blocky, so that its support decomposes as a disjoint union
$\operatorname{supp}(B) = \bigcup_\lambda X_\lambda \times Y_\lambda$, with the
$X_\lambda$ pairwise disjoint and the $Y_\lambda$ pairwise disjoint. Label
every element of $X_\lambda$ and of $Y_\lambda$ by $\lambda$, and label every
row or column not appearing in any block by $\bot$. Then $B(x,y)=1$ exactly
when $x$ and $y$ received the same non-$\bot$ label, which is the definition
of an equality kernel.

Conversely, if $B(x,y)=1 \iff \phi(x)=\psi(y)\in\Lambda$, then for each
$\lambda\in\Lambda$ let $X_\lambda = \phi^{-1}(\lambda)$ and $Y_\lambda =
\psi^{-1}(\lambda)$; these are the desired blocks.
\end{proof}

This is a triviality, but it is the triviality that makes the whole paper
possible: it says that proving a decomposition theorem with equality kernels
as atoms, in the case of the trivial group action, \emph{is} the theorem
about blocky matrices.

\subsection{Reduced row mass}

Now suppose a group $\Gamma$ acts on $X$ and $Y$, and $K:X\times Y\to \Z$ is
diagonally invariant. We are going to want to measure, at any stage of the
argument, how far $K$ is from being expressible with a bounded number of
equality kernels. The right measure turns out not to be the naive row sum
$\sum_y |K(x,y)|$, which can be enormous simply because many columns happen to
agree with one another; instead we should count each \emph{distinct} column
only once.

\begin{definition}[Column equivalence and reduced row mass]
For an integer kernel $K:X\times Y \to \Z$, define an equivalence relation on
$Y$ by
$$
y \sim_K y' \iff K(x,y) = K(x,y') \text{ for every } x \in X,
$$
and write $\mathcal C(K) = Y/{\sim_K}$ for the resulting set of \emph{distinct
columns}. Because $K$ is $\Gamma$-invariant, the relation $\sim_K$ is
$\Gamma$-invariant too, so $\Gamma$ acts on $\mathcal C(K)$ by $g[y] = [gy]$,
and for a class $C \in \mathcal C(K)$ the value $K(x,C)$ is unambiguous.
Define the \emph{reduced row mass} of $K$ to be
$$
\Dred(K) := \sup_{x\in X} \sum_{C \in \mathcal C(K)} |K(x,C)|,
$$
interpreting a sum of nonnegative terms over a possibly infinite index set as
the supremum of its finite subsums. For a real kernel $A$ that is close to an
integer kernel (made precise in Section~\ref{sec:hilbert-mistake}), we set
$\Dred(A) := \Dred(A_\Z)$, where $A_\Z$ is its entrywise rounding.
\end{definition}

For a finite matrix, $\Dred(K)$ is simply the largest row sum you get after
you have thrown away duplicate columns -- exactly the quantity relevant to the
following observation, which says that once $\Dred$ is finite, we are already
done, and moreover done cheaply.

\begin{lemma}[Terminal equality decomposition]
\label{lem:terminal-eq}
Let $\Gamma$ act freely on $X$, and let
$K:X\times Y\to\Z$ be $\Gamma$-invariant with $\Dred(K) = D < \infty$. Then
$$
K = \sum_{i=1}^L \sigma_i B_i, \qquad L \le 2D,
$$
with each $B_i$ a $\Gamma$-equivariant equality kernel.
\end{lemma}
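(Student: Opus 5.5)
We decompose $K = K^+ - K^-$, where $K^\pm=\max(\pm K,0)$ entrywise. Each part is $\Gamma$-invariant, nonnegative and integer-valued, and has $\Dred(K^\pm)\le D$. The inequality holds because $y\sim_K y'$ implies $y\sim_{K^\pm}y'$, so the classes of $K^\pm$ are unions of classes of $K$, and the triangle inequality then gives $\sum_{C'}|K^\pm(x,C')|\le \sum_C |K(x,C)|$. It therefore suffices to write a nonnegative invariant integer kernel $N$ with $\Dred(N)=D_N$ as a sum of at most $D_N$ equivariant equality kernels. The claimed bound $L\le 2D$ then follows, since $D_{K^+}+D_{K^-}$ is bounded by $\sup_x\sum_C (K^+(x,C)+K^-(x,C))$, and this sum is exactly $\Dred(K)$ computed on the $K$-classes. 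To get this last equality, I would run the construction below on both parts using the common partition $\mathcal C(K)$ rather than each part's own partition.

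\textbf{Layering construction.} For $N\ge 0$ integer-valued and constant on the classes of $\mathcal C(K)$, set $N_t(x,y)=\1[N(x,y)\ge t]$ for $t=1,2,\dots$, so that $N=\sum_t N_t$. For fixed $x$, the number of classes $C$ with $N(x,C)\ge t$ is at most $D/t$, but $N_t$ need not be an equality kernel: one row may meet several classes. I would therefore peel greedily. Define the label $\phi_t(x)$ to be one chosen class $C$ with $N(x,C)\ge t$ (or $\bot$ if none), set $\psi(y)=[y]$, and let $B(x,y)=\1[\phi(x)=\psi(y)]$. Subtracting $B$ from $N$ removes exactly one unit from one class in every nonzero row, so each nonzero row's reduced mass $\sum_C N(x,C)$ drops by exactly $1$. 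Iterating, after at most $D$ steps every row is zero. The remainder stays nonnegative and stays constant on the $K$-classes, so the same partition can be reused at every step.

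\textbf{Equivariance.} This is the main obstacle. The label $\psi(y)=[y]$ is equivariant automatically, because $\Gamma$ acts on $\mathcal C(K)$ and $[gy]=g[y]$. The choice $\phi(x)$ must satisfy $\phi(gx)=g\phi(x)$, and this is where freeness on $X$ enters. Pick a representative $x_0$ of each orbit, choose $\phi(x_0)$ to be any class where the current remainder is positive (or $\bot$), and define $\phi(gx_0)=g\phi(x_0)$; freeness makes this well defined. The resulting $B$ is then equivariant, since $B(gx,gy)=\1[g\phi(x)=g[y]]=B(x,y)$. The remainder is also invariant: the remainder at $gx_0$ is positive at $g\phi(x_0)$ by invariance, so the subtraction never goes negative and the reduced mass of every row in the orbit drops by $1$ uniformly. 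Each step is performed simultaneously over all orbits, and rows already zero receive $\bot$. Since every nonzero row loses one unit per step and starts with at most $D$, the process ends after at most $D$ steps per sign. This gives $L\le 2D$, and no finiteness of $X/\Gamma$ is needed because the choices are made in parallel across orbits.
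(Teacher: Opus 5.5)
Your proposal is correct and follows the paper's proof: split $K=K^+-K^-$, then for each sign greedily subtract one $\Gamma$-equivariant equality kernel per step, choosing a positive class at each orbit representative and transporting that choice via freeness, so that every nonzero row loses exactly one unit of mass on the fixed old column classes. One small slip: the claim $D_{K^+}+D_{K^-}\le \sup_x\sum_C\bigl(K^+(x,C)+K^-(x,C)\bigr)=\Dred(K)$ is false in general, since a sum of suprema can exceed the supremum of a sum (e.g.\ one row equal to $+1$ on one class and another row equal to $-1$ on a different class, where $L=2=2D$ is forced). You do not need this claim, because $\Dred(K^{\pm})\le D$ for each sign already gives $L\le 2D$.
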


\begin{proof}
We first treat the case $K \ge 0$; the general case follows by applying this to the positive and negative
parts $K^+$ and $K^-$ separately. Passing from $K$ to either $K^+$ or $K^-$
can only merge column classes, so each has reduced row mass at most $D$;
moreover $K=K^+-K^-$.

Pick one representative row $x$ from each orbit of $X$. For each such
representative whose row is not identically zero, choose \emph{some} column
class $C_x \in \mathcal C(K)$ with $K(x,C_x) > 0$: such a class exists because
the row is nonzero. We now extend this single choice, made once per orbit, to
the whole orbit equivariantly, by declaring $C_{gx} := gC_x$. Freeness of the
action on $X$ guarantees that every point of $X$ is $gx$ for a unique $g$
(once we have fixed the representative $x$ of its orbit), so this is a
genuine, well-defined function of all of $X$, not just of the representatives.

Now define
$$
B(x,y) = 1 \iff [y] = C_x
$$
on rows with nonzero $K$, and $B \equiv 0$ on rows belonging to an orbit
where $K$ vanishes identically. This $B$ is manifestly an equality kernel: the
labelling map on rows sends $x$ to $C_x$ (or to $\bot$ on zero rows), and the
labelling map on columns sends $y$ to its class $[y]$. It is
$\Gamma$-equivariant by construction, since $C_{gx}=gC_x$ was arranged
precisely so that the definition of $B$ commutes with the action.

The point of subtracting $B$ from $K$ is that, on every row $x$ where the
row was nonzero, the value on the selected old column class $C_x$ decreases
by exactly $1$, and it stays nonnegative. Moreover $B$ is constant on every
old column class of $K$, so passing from $K$ to $K-B$ can only merge old
column classes; it cannot split one. Hence, for each row, the reduced row
mass of $K-B$ is at most the sum of the absolute values on the old classes,
and that sum is exactly one smaller than before on every nonzero row.
Repeating this operation can therefore happen at most $D$ times before every
row is zero, i.e.\ before $K$ itself is exhausted. This proves the lemma with $L\le D$ in the case
$K\ge0$, and hence $L\le 2D$ after combining $K^+$ and $K^-$.
\end{proof}

We will meet $\Dred$ again as the quantity that decides, at every node of the
recursion in Section~\ref{sec:equivariant}, whether we should stop (invoking
Lemma~\ref{lem:terminal-eq}) or split further.

\section{Hilbert factorizations and the weighted mistake tree}
\label{sec:hilbert-mistake}

We now set up the analytic side of the argument: what it means for a kernel
to be ``almost'' integer-valued, and the combinatorial game -- the weighted
mistake tree -- whose depth we are going to bound using the Hilbert-space
factorization.

Throughout, recall the factorization norm $\|\cdot\|_{\gamma_2}$ from the
introduction. We record for later use the elementary but useful fact that
$$
\|A\|_{\max} \le \|A\|_{\gamma_2},
$$
which follows from Cauchy--Schwarz applied to $A(x,y)=\langle u_x,v_y\rangle$.
For finite matrices, $\|\cdot\|_{\gamma_2}$ coincides with the usual
Schur-multiplier norm.

Rounding will be used repeatedly, so we fix conventions. For $t \in \R$,
write $t_\Z$ for a choice of nearest integer to $t$ (fixed once and for all
at half-integers, say by rounding down). A real kernel $A$ is
\emph{$\epsilon$-almost integer-valued} if $\|A - A_\Z\|_{\max} \le \epsilon$,
where $A_\Z(x,y) := (A(x,y))_\Z$.

Now to the combinatorial game. Fix $\alpha>0$; think of $\alpha$ as a
resolution below which we are not able, or not willing, to distinguish
values. The following notion is a weighted analogue of Littlestone dimension,
the parameter from online learning theory that measures how deep a binary
decision tree of ``queries'' one can shatter.

\begin{definition}[Weighted mistake tree and its dimension]
A \emph{weighted mistake tree of depth $d$ over $X$} is a complete binary
tree of depth $d$ whose internal nodes $\nu$ are labelled by pairs
$(x(\nu), w(\nu)) \in X \times \R$: a row to query, and a threshold. We think
of the tree as a strategy for an adaptive guesser who, at each internal node,
asks ``is the entry in row $x(\nu)$ large or small relative to $w(\nu)$?''
and moves left or right accordingly.

A matrix $A \in \R^{X\times Y}$ \emph{$\alpha$-shatters} the tree if every
root-to-leaf path can be ``realized'' by some column: that is, for every path
there is a $y \in Y$ such that at each internal node $\nu$ on the path,
$$
A(x(\nu), y) \ge w(\nu) + \alpha/2 \quad \text{if the path turns left at } \nu,
$$
$$
A(x(\nu), y) \le w(\nu) - \alpha/2 \quad \text{if the path turns right at } \nu.
$$
The largest depth of a tree that $A$ $\alpha$-shatters is the \emph{weighted
Littlestone dimension} $\Ldim_\alpha(A)$.
\end{definition}

The intuition is that if $\Ldim_\alpha(A)$ is large, then $A$ contains a rich
combinatorial structure: for every one of the exponentially many strategies
the adversary (choosing left/right moves) might follow, there really is a
column of $A$ that is consistent with every single query along that
particular path, always by a margin of at least $\alpha/2$. This is a strong
demand, and our first real theorem says that a bounded factorization norm
makes it impossible to sustain for very long.

One more piece of notation before we begin: we will run this game along a
\emph{random} path, generated by independent uniform signs
$\varepsilon_1,\dots,\varepsilon_d \in \{-1,1\}$, going left when
$\varepsilon_t=1$ and right when $\varepsilon_t=-1$. Write $\mathcal F_t =
\sigma(\varepsilon_1,\dots,\varepsilon_t)$ for the natural filtration. The
crucial structural feature of the tree is that the row $x_t$ and threshold
$w_t$ queried at level $t$, having been determined by the tree structure and
by $\varepsilon_1,\dots,\varepsilon_{t-1}$ alone, are $\mathcal
F_{t-1}$-measurable: in probabilists' language, they are \emph{predictable}.
Predictability is the key property used in the martingale estimate below.

\section{The martingale bound on weighted Littlestone dimension}

\begin{theorem}[Martingale weighted Littlestone bound]
\label{thm:martingale}
For every real matrix $A$ and every $\alpha>0$,
$$
\Ldim_\alpha(A) \le \frac{4\|A\|_{\gamma_2}^2}{\alpha^2}.
$$
\end{theorem}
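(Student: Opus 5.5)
Fix a factorization $A(x,y)=\langle u_x,v_y\rangle$ with $\|u_x\|\le 1$ and $\|v_y\|\le\gamma$, where $\gamma$ is arbitrarily close to $\|A\|_{\gamma_2}$; by rescaling we may take $\sup\|u_x\|\cdot\sup\|v_y\|\le\gamma$. Suppose $A$ $\alpha$-shatters a weighted mistake tree of depth $d$. We run the random path $\varepsilon_1,\dots,\varepsilon_d$ and let $y=y(\varepsilon)$ be a column realizing the resulting leaf path. The realization condition at level $t$ says that, for the predictable row $x_t$ and threshold $w_t$,
$$\varepsilon_t\bigl(\langle u_{x_t},v_y\rangle-w_t\bigr)\ge \alpha/2 .$$
Summing over $t$ gives
$$\Big\langle \sum_t \varepsilon_t u_{x_t},\, v_y\Big\rangle \ \ge\ \frac{d\alpha}{2}+\sum_t \varepsilon_t w_t .$$
We then take expectations over the random path.

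The three steps are as follows. First, $\E\sum_t\varepsilon_t w_t=0$, since $w_t$ is $\mathcal F_{t-1}$-measurable and $\varepsilon_t$ is independent of $\mathcal F_{t-1}$ with mean zero. Second, by Cauchy--Schwarz,
$$\E\Big\langle \sum_t\varepsilon_t u_{x_t},v_y\Big\rangle\le \gamma\,\E\Big\|\sum_t\varepsilon_tu_{x_t}\Big\|\le\gamma\Big(\E\Big\|\sum_t\varepsilon_tu_{x_t}\Big\|^2\Big)^{1/2}.$$
Third, the martingale orthogonality: for $s<t$,
$$\E\,\varepsilon_s\varepsilon_t\langle u_{x_s},u_{x_t}\rangle=\E\bigl[\varepsilon_s\langle u_{x_s},u_{x_t}\rangle\,\E[\varepsilon_t\mid\mathcal F_{t-1}]\bigr]=0,$$
because $x_s$, $x_t$ and $\varepsilon_s$ are all $\mathcal F_{t-1}$-measurable. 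Hence
$$\E\Big\|\sum_t\varepsilon_tu_{x_t}\Big\|^2=\sum_t\E\|u_{x_t}\|^2\le d .$$
Combining the three steps gives $d\alpha/2\le\gamma\sqrt d$, so $d\le 4\gamma^2/\alpha^2$. Letting $\gamma\downarrow\|A\|_{\gamma_2}$ yields the theorem.

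The one subtle point is that $y$, and hence $v_y$, depends on the whole path, including future signs. For this reason we do not try to make the left-hand pairing itself a martingale. Instead, Cauchy--Schwarz is applied pathwise using only the uniform bound $\|v_y\|\le\gamma$, and the martingale structure is used solely on $\sum\varepsilon_tu_{x_t}$, whose summands are predictable rows multiplied by fresh signs. The threshold term vanishes in expectation by the same predictability.

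A minor bookkeeping issue remains: the infimum in the $\gamma_2$-norm need not be attained, and a realizing column is chosen for each of the $2^d$ paths. Both are harmless, since the path space is finite and the inequality holds for every near-optimal factorization. Degenerate cases such as $\|A\|_{\gamma_2}=0$ are handled by the same inequality, which then forces $d=0$.
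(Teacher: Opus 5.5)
Your proposal is correct and follows essentially the same argument as the paper. You apply Cauchy--Schwarz pathwise using only $\|v_y\|\le\gamma$ (so the path-dependence of $y$ does no harm), kill the threshold term by predictability, and kill the cross terms of $\E\|\sum_t\varepsilon_t u_{x_t}\|^2$ by conditioning on $\mathcal F_{t-1}$, which yields $\alpha d/2\le\gamma\sqrt d$ and then the limit $\gamma\downarrow\|A\|_{\gamma_2}$.
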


We first describe the idea, since the short proof is easiest to read with the
probabilistic picture in mind. Fix a
$\gamma$-factorization of $A$ and suppose $A$ $\alpha$-shatters a tree of
depth $d$. We are going to follow a \emph{random} root-to-leaf path. At each
level $t$ we pick up a vector $\varepsilon_t u_{x_t}$, where $x_t$ is the row
queried at that level and $\varepsilon_t=\pm1$ tells us which way the path
turned. If we sum these vectors over $t=1,\dots,d$, we get a random vector in
Hilbert space; and the shattering hypothesis is designed so that this random
vector, paired against the vector $v_y$ of a column consistent with the whole
path, is forced to be large -- roughly $\alpha d /2$. On the other hand, the
\emph{typical size} of a sum of $d$ vectors of norm at most $1$, each
multiplied by an independent random sign, is only about $\sqrt d$, not $d$ --
\emph{provided} the vectors themselves do not conspire with the signs. This
is exactly where predictability comes in: because the row $x_t$ picked at
time $t$ cannot look ahead at $\varepsilon_t$, the cross terms
$\varepsilon_s\varepsilon_t \langle u_{x_s}, u_{x_t}\rangle$ for $s<t$
average out to zero, just as in the classical fact that a martingale's
quadratic variation adds rather than multiplies. Comparing the ``forced to be
at least $\alpha d/2$'' lower bound with the ``typically about $\gamma\sqrt d$''
upper bound and solving for $d$ gives the theorem.

\begin{proof}
Fix a $\gamma$-factorization $A(x,y) = \langle u_x,v_y\rangle$,
$\|u_x\|\le1$, $\|v_y\|\le \gamma$, and suppose $A$ $\alpha$-shatters a
weighted mistake tree of depth $d$. Let $\varepsilon_1,\dots,\varepsilon_d$
be independent uniform random signs, and follow the path that turns left when
$\varepsilon_t=1$ and right when $\varepsilon_t=-1$. Write $x_t, w_t$ for the
(random, but predictable) row and threshold encountered at level $t$.

By definition of shattering, for every realization of the path there is a
column $y$ (depending on the realization) such that for every $t$,
$$
\varepsilon_t\big(\langle u_{x_t}, v_y\rangle - w_t\big) \ge \frac\alpha2 .
$$
Summing over $t=1,\dots,d$ and using linearity of the inner product in its
first argument,
$$
\Big\langle \sum_{t=1}^d \varepsilon_t u_{x_t},\, v_y \Big\rangle \ge \frac{\alpha d}{2} + \sum_{t=1}^d \varepsilon_t w_t .
$$
Since $\|v_y\| \le \gamma$, the left-hand side is at most $\gamma \big\|
\sum_t \varepsilon_t u_{x_t} \big\|$, regardless of which $y$ realizes the
path; so, taking the supremum over the (path-dependent) choice of $y$ and then
expectation over the random path, and using that $\E[\varepsilon_t w_t] = 0$
because $w_t$ is $\mathcal F_{t-1}$-measurable while $\varepsilon_t$ is
independent of $\mathcal F_{t-1}$ with mean zero,
$$
\frac{\alpha d}{2} \le \gamma\, \E\Big\| \sum_{t=1}^d \varepsilon_t u_{x_t} \Big\| .
$$
By Cauchy--Schwarz, $\E\|\cdot\| \le \big(\E\|\cdot\|^2\big)^{1/2}$, so it
remains to bound the second moment. Expand the square:
$$
\E\Big\| \sum_{t=1}^d \varepsilon_t u_{x_t} \Big\|^2
= \sum_{t=1}^d \E\|u_{x_t}\|^2 + 2\sum_{s<t} \E\big[\varepsilon_s \varepsilon_t \langle u_{x_s}, u_{x_t}\rangle\big].
$$
Fix $s<t$. Conditioning on $\mathcal F_{t-1}$, the vectors $u_{x_s}$ and
$u_{x_t}$ (which depend only on $\varepsilon_1,\dots,\varepsilon_{t-1}$, since
$x_t$ is predictable), and also $\varepsilon_s$, are all $\mathcal
F_{t-1}$-measurable, while $\varepsilon_t$ is independent of $\mathcal
F_{t-1}$ and has mean zero. Hence
$$
\E\big[\varepsilon_s\varepsilon_t \langle u_{x_s},u_{x_t}\rangle\big]
= \E\Big[ \varepsilon_s\langle u_{x_s},u_{x_t}\rangle \cdot \E[\varepsilon_t \mid \mathcal F_{t-1}] \Big] = 0.
$$
So every cross term vanishes, and, using $\|u_{x_t}\|\le1$,
$$
\E\Big\|\sum_{t=1}^d \varepsilon_t u_{x_t}\Big\|^2 = \sum_{t=1}^d \E\|u_{x_t}\|^2 \le d .
$$
Combining, $\alpha d /2 \le \gamma\sqrt d$, i.e.\ $d \le 4\gamma^2/\alpha^2$.
Finally, taking the infimum over $\gamma$-factorizations of $A$ (i.e.\ letting
$\gamma \to \|A\|_{\gamma_2}$) proves the theorem.
\end{proof}

It is instructive to compare this with the more naive route, used in earlier
work on Schur multipliers, of encoding the threshold $w_t$ directly into an
enlarged factorization (for instance by appending a coordinate). That route
gives a bound of the shape $\Ldim_\alpha(A) = O_\alpha(\gamma^4)$, quadratically worse
in $\gamma$ than Theorem~\ref{thm:martingale}. The martingale argument above
avoids this loss entirely, because it never needs to fold the thresholds
$w_t$ into the geometry at all -- they only ever interact with the mean-zero
random signs $\varepsilon_t$, and disappear upon taking expectation. This
quadratic saving is exactly what turns the final exponent of the whole paper
from six down to four; see Section~\ref{sec:matrix}.

\section{From bounded dimension to near-constant columns}

We now perform step (2) of the plan in Section~\ref{subsec:picture}: convert
the dimension bound on $\Ldim_\alpha$ into a statement that a matrix of
bounded $\gamma_2$-norm has, in every row simultaneously, a \emph{large} set
of columns on which its value barely moves. This kind of implication --
bounded combinatorial dimension forces large uniform, or near-uniform,
substructures -- is the same flavour of argument that appears in the
Sauer--Shelah lemma and its many descendants, and the proof below is a
weighted, real-valued version of the standard argument.

\begin{proposition}[Near constancy from weighted Littlestone dimension]
\label{prop:near}
Let $A \in \R^{X\times Y}$ with $Y$ finite and nonempty, let
$d=\Ldim_\alpha(A)<\infty$, let $M=\|A\|_{\max}$, and let
$\eta\in(0,1]$. Put $q := \lceil 2M/\alpha\rceil + 2$. Then there
exist $S \subseteq Y$ and $g: X \to [-M,M]$ such that
$$
|S| \ge |Y| \left(\frac\eta q\right)^d
$$
and, for every $x \in X$,
$$
\Pr_{y \in S}\big[\, |A(x,y) - g(x)| > 2\alpha \,\big] \le \eta .
$$
\end{proposition}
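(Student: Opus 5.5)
We argue by induction on $d=\Ldim_\alpha(A)$, applied to column-restrictions $A|_{X\times Y'}$ with $Y'\subseteq Y$. Restriction can only lower the weighted Littlestone dimension, since a tree shattered by a submatrix is shattered by $A$. We will prove the statement in the slightly stronger form $|S|\ge |Y'|(\eta/q)^{d'}$ for every $Y'$ with $\Ldim_\alpha(A|_{Y'})\le d'$. Throughout, we grid $[-M,M]$ by the thresholds $w_j := -M+j\alpha$ for $j=0,1,\dots,q-2$. Consecutive thresholds are $\alpha$ apart, and there are at most $q-1$ of them, which is where $q=\lceil 2M/\alpha\rceil+2$ comes from.

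\textbf{Base case.} Suppose $d'=0$, so no depth-one tree is shattered. Then for every $x$ and every threshold $w$, either no column satisfies $A(x,y)\ge w+\alpha/2$, or no column satisfies $A(x,y)\le w-\alpha/2$. Fix $x$ and let $a=\min_y A(x,y)$ and $b=\max_y A(x,y)$. If $b-a>\alpha$, then choosing $w=(a+b)/2$ is shattered, a contradiction. So every row of $A|_{Y'}$ has oscillation at most $\alpha$. We take $S=Y'$ and $g(x)=A(x,y_*)$ for any fixed column $y_*$. Then $|A(x,y)-g(x)|\le\alpha<2\alpha$ for all $y$, so the probability bound holds with value $0\le\eta$.

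\textbf{Inductive step.} Suppose $d'\ge1$ and the claim holds for smaller dimension. Starting from $S=Y'$, either a suitable $g$ already exists, in which case we are done since $(\eta/q)^{d'}\le 1$, or some row $x$ witnesses failure. For that $x$, every candidate value $c$ has more than an $\eta$-fraction of columns with $|A(x,y)-c|>2\alpha$. Use this row to partition $Y'$ into at most $q$ cells:
\[
I_j=\{y : A(x,y)\in[w_j-\alpha/2,\,w_j+\alpha/2)\},
\]
together with the two tails beyond the grid. We aim to find a cell $j$ and a side of the threshold $w_j$ such that:
\begin{itemize}[label={}]
\item the set $Y''$ of columns on that side (at margin $\alpha/2$) has $|Y''|\ge (\eta/q)|Y'|$, and
\item $\Ldim_\alpha(A|_{Y''})\le d'-1$.
\end{itemize}
The second property is the standard Littlestone step. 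If both the set $P=\{A(x,\cdot)\ge w+\alpha/2\}$ and the set $N=\{A(x,\cdot)\le w-\alpha/2\}$ shattered trees of depth $d'$, then placing $(x,w)$ at a new root would give a shattered tree of depth $d'+1$, contradicting $\Ldim_\alpha\le d'$. So for each threshold $w_j$, at least one of $P_j$ and $N_j$ has dimension at most $d'-1$. Induction applied to that set gives $|S|\ge (\eta/q)|Y'|\,(\eta/q)^{d'-1}$, which is the claimed bound.

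\textbf{Main obstacle: the counting.} We must show that for some $j$, the low-dimensional side among $P_j$ and $N_j$ has size at least $(\eta/q)|Y'|$. The plan is to scan the thresholds upward.
\begin{itemize}[label={}]
\item Let $j^*$ be the smallest index for which $N_{j^*}$ has dimension at most $d'-1$; it exists because the top tail forces this.
\item For $j<j^*$, the set $P_j$ must have low dimension. So we may use $P_{j^*-1}$ or $N_{j^*}$.
\item These two sets miss only the columns whose values lie within $2\alpha$ of $c:=w_{j^*}-\alpha/2$.
\end{itemize}
By the failure of $x$, those missed columns make up at most a $(1-\eta)$ fraction. Hence $|P_{j^*-1}\cup N_{j^*}|\ge\eta|Y'|$, and one of the two has size at least $\eta|Y'|/2\ge(\eta/q)|Y'|$, using $q\ge2$.

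The endpoint cases $j^*=0$ and $j^*$ at the top of the grid are handled by the tails, where one side is empty. The step we expect to need the most care is matching the margins: the $\alpha/2$ shattering margin on each side, plus the grid spacing $\alpha$, must fit exactly inside the $2\alpha$ window given by the failure of $x$. Finally, $g$ takes values in $[-M,M]$ once it is clipped there, and clipping does not increase $|A-g|$ because $|A|\le M$.
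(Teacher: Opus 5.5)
Your threshold-scanning route differs from the paper's and can be made to work. However, the counting step, which you rightly single out as the crux, has the roles of $P$ and $N$ reversed, and as written it fails.

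Since $w_j$ increases with $j$, the sets $N_j=\{y:A(x,y)\le w_j-\alpha/2\}$ increase with $j$. So the indices with $\Ldim_\alpha(A|_{X\times N_j})\le d'-1$ form an initial segment, and it contains $j=0$ because $N_0=\{y:A(x,y)\le -M-\alpha/2\}=\emptyset$. Thus your $j^*$ is always $0$: it is the bottom tail, not the top one, that forces its existence. The pair you then use is $N_0=\emptyset$ together with an undefined $P_{-1}$. If you set $w_{-1}=-M-\alpha$, then $P_{-1}$ is all of $Y'$, with no dimension drop. In fact $j^*\ge1$ is impossible under your definition, since $N_{j^*-1}\subseteq N_{j^*}$. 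And if it did occur, $P_{j^*-1}\cup N_{j^*}$ would be all of $Y'$, so the failure of the row $x$ would never be used.

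The fix is to swap $P$ and $N$:
\begin{itemize}
\item Let $j^*$ be the least index with $\Ldim_\alpha(A|_{X\times P_{j^*}})\le d'-1$; it exists because $P_{q-2}=\emptyset$.
\item For $j<j^*$ the set $P_j$ has dimension at least $d'$, so $N_j$ has dimension at most $d'-1$.
\item Use $N_{j^*-1}$ and $P_{j^*}$. Their union is exactly $\{y:|A(x,y)-c|\ge\alpha\}$ with $c=w_{j^*}-\alpha/2$.
\item The failure of $x$ at $c$ (clipped into $[-M,M]$) then puts more than $\eta|Y'|$ columns in the union.
\item When $j^*=0$, use $P_0$ alone, since every column outside $P_0$ lies within $\alpha/2$ of $-M$.
\end{itemize}
This is consistent with your choice of $c$ and your ``top tail'' remark, so it is presumably what you meant.

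With this correction your proof is complete and genuinely different from the paper's. The paper's Case A/Case B dichotomy is the same as yours. In Case B, however, it does the following:
\begin{itemize}
\item it buckets the values of the bad row into at most $q$ intervals of length $\alpha$;
\item it pigeonholes a class $Y_i$ with $|Y_i|\ge|Y|/q$, and a class $Y_j$ outside the $4\alpha$-window $Y_{i-1}\cup\dots\cup Y_{i+2}$ with $|Y_j|\ge\eta|Y|/q$;
\item it separates just these two by one threshold, losing a factor $\eta/q$ per level.
\end{itemize}
Your scan instead uses monotonicity of $\Ldim_\alpha$ under inclusion of columns. It finds two adjacent low-dimensional half-lines covering everything outside a window of width $2\alpha$. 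So you lose only $\eta/2$ per level, and in fact prove $|S|\ge|Y|(\eta/2)^d$ with no dependence on $q$. The argument even works with radius $\alpha$ in place of $2\alpha$.

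The paper's version needs only a single separating threshold and no scan. Its extra factor $q$ also costs nothing downstream: in Lemma~\ref{lem:local-split} one has $q\le C(1+M)$ with $M\le\gamma$, and $\eta=\epsilon/(4M)$ already depends on $M$.
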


This is a quantitative reformulation of the weighted-Littlestone
near-constancy argument of Goh and Hatami \cite[Proposition~3.1]{GH}; we
include the proof both for completeness and to keep track of the parameters
needed below.

\begin{proof}
We argue by strong induction on $d$.

If $d=0$: for every row $x$ the values $\{A(x,y): y\in Y\}$ must have
diameter less than $\alpha$. Indeed, if two columns $y,y'$ had
$|A(x,y)-A(x,y')|\ge\alpha$, then labelling the single-node tree by $(x,w)$,
where $w$ is the midpoint of $A(x,y)$ and $A(x,y')$, would be $\alpha$-shattered
by $A$ (one of $y,y'$ realizes the left branch, the other the right), giving
$\Ldim_\alpha(A)\ge1$, a contradiction. So $S=Y$ and any $g(x)$ in this tiny
range works, in particular with the stronger conclusion that the exceptional
probability is $0$.

Now suppose $d\ge1$ and the proposition holds for every dimension strictly smaller than $d$. There are two cases.

\emph{Case A.} Suppose that for \emph{every} row $x$ there is an interval of
length $4\alpha$ containing at least $(1-\eta)|Y|$ of the values
$\{A(x,y):y\in Y\}$. Then take $g(x)$ to be the centre of such an interval
(projected into $[-M,M]$ if necessary), and take $S=Y$. Every value in the
chosen interval is at distance at most $2\alpha$ from $g(x)$, so the stated
strict-tail bound holds.

\emph{Case B.} Otherwise there is a row $x$ for which \emph{no} interval of
length $4\alpha$ captures a $(1-\eta)$-fraction of $Y$. Partition the range
$[-M,M]$ into consecutive intervals of length $\alpha$; there are at most $q$
of them, so they define at most $q$ column classes $Y_1,\dots,Y_q$ (grouping
$y$ by which interval $A(x,y)$ lands in). By pigeonhole some class, say
$Y_i$, has $|Y_i| \ge |Y|/q$.

Now, the union $Y_{i-1}\cup Y_i \cup Y_{i+1}\cup Y_{i+2}$ (indices truncated
to the valid range) lies inside a single interval of length $4\alpha$ by
construction; and by the assumption of Case B, this union cannot capture a
$(1-\eta)$-fraction of $Y$. So at least $\eta|Y|$ of the columns lie
\emph{outside} it, and by pigeonhole again, some single class $Y_j$ among
those outside classes has $|Y_j| \ge \eta|Y|/q$.

Because $Y_j$ lies outside
$Y_{i-1}\cup Y_i\cup Y_{i+1}\cup Y_{i+2}$, its interval index satisfies
$j\le i-2$ or $j\ge i+3$. Thus the value intervals supporting $Y_i$ and
$Y_j$ are separated by a gap of at least $\alpha$. Choosing a threshold $w$
at the midpoint of this gap puts one class entirely above $w+\alpha/2$ and
the other entirely below $w-\alpha/2$.

Consider the two restrictions $A|_{X\times Y_i}$ and $A|_{X\times Y_j}$. If
\emph{both} had weighted Littlestone dimension at least $d$, we could take a
shattered tree of depth $d$ for each, attach them as the two subtrees below a
new root labelled $(x,w)$, and get a tree of depth $d+1$ shattered by $A$ (a
path through the left subtree is realized by a column in $Y_i$, consistent
with the root because $Y_i$ lies above $w+\alpha/2$, and similarly on the
right) -- contradicting $\Ldim_\alpha(A) = d$. So one of the two restrictions,
say to $Y' \in \{Y_i,Y_j\}$, has $\Ldim_\alpha(A|_{X\times Y'}) \le d-1$, and
$|Y'| \ge \eta|Y|/q$.

Let
$$
 d':=\Ldim_\alpha(A|_{X\times Y'}),\qquad
 M':=\|A|_{X\times Y'}\|_{\max},\qquad
 q':=\lceil 2M'/\alpha\rceil+2.
$$
Then $d'\le d-1$ and $q'\le q$. By the strong inductive hypothesis applied
to $A|_{X\times Y'}$ with the same $\eta$, there are $S\subseteq Y'$ and
$g$ such that
$$
 |S|\ge |Y'|\left(\frac{\eta}{q'}\right)^{d'}
 \ge |Y'|\left(\frac{\eta}{q}\right)^{d-1}
 \ge |Y|\left(\frac{\eta}{q}\right)^d.
$$
Here we used $0<\eta/q\le1$, together with $d'\le d-1$, in the second
inequality. The required near-constancy conclusion is inherited from the
inductive hypothesis. This completes the induction.
\end{proof}

Specializing the parameters gives the version we will actually use.

\begin{corollary}[Large near-constant set]
\label{cor:near}
There is an absolute constant $C$ such that if $A\in\R^{X\times Y}$ with
$Y$ finite and nonempty, $\|A\|_{\gamma_2}\le\gamma$, $\|A\|_{\max}\le M$, and
$\eta \in (0,1]$, then there exist $S\subseteq Y$
and $g:X\to[-M,M]$ with
$$
|S| \ge |Y|\left(\frac{\eta}{C(1+M)}\right)^{C\gamma^2}
$$
and $\Pr_{y\in S}[\,|A(x,y)-g(x)|>1/4\,] \le \eta$ for every $x\in X$.
\end{corollary}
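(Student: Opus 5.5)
We plan to apply Proposition~\ref{prop:near} to $A$ with the resolution $\alpha := 1/8$ and bound $d$ using Theorem~\ref{thm:martingale}. With this choice, $2\alpha = 1/4$. The near-constancy conclusion of the proposition, namely $\Pr_{y\in S}[|A(x,y)-g(x)|>2\alpha]\le\eta$ for every $x$, is then exactly the required one. The function $g$ also already takes values in $[-M',M']$, where $M'=\|A\|_{\max}\le M$. So the only remaining work is to bound $|S|$ from below in the stated form.

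For the dimension, Theorem~\ref{thm:martingale} gives
\[
d=\Ldim_{1/8}(A)\le 4\gamma^2\cdot 64=256\gamma^2 .
\]
In particular $d<\infty$, so the proposition applies. It is applied with its own $M'=\|A\|_{\max}$; if one insists on $g$ mapping into $[-M,M]$, note that $[-M',M']\subseteq[-M,M]$. The parameter is $q=\lceil 16M'\rceil+2\le 16M+3\le 16(1+M)$. The proposition then yields
\[
|S|\ge |Y|\,(\eta/q)^d\ge |Y|\left(\frac{\eta}{16(1+M)}\right)^{d}.
\]

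The last step is to replace the exponent $d$ by $C\gamma^2$. Since $\eta\le1$ and $16(1+M)\ge1$, the base $\eta/(16(1+M))$ lies in $(0,1]$. Increasing the exponent therefore only decreases the bound, so $d\le 256\gamma^2$ gives the claim with $C=256$ (which also covers the constant $16$ in the base). One point needs care: if $\gamma^2$ is tiny, then $C\gamma^2$ might be smaller than $d$, but only if $d=0$, and then the bound $|S|=|Y|$ is trivially fine. Since $d\le 256\gamma^2$ always, the monotonicity argument handles every case uniformly. The degenerate case $\gamma=0$ (so $A=0$) forces $d=0$ and is also covered.

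No step is a genuine obstacle. The only things to check are the direction of the monotonicity in the exponent (base at most $1$) and that the rounding of $q$ is absorbed by the constant.
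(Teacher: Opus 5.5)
Your proposal is correct and is essentially the paper's proof: apply Proposition~\ref{prop:near} with $\alpha=1/8$, bound $d=\Ldim_{1/8}(A)\le 256\gamma^2$ via Theorem~\ref{thm:martingale}, bound $q\le 16(1+M)$, and use that the base $\eta/q$ lies in $(0,1]$, so raising the exponent to $C\gamma^2$ only weakens the bound. Your explicit choice $C=256$ just makes the paper's ``absorbing constants'' concrete. The aside about small $\gamma^2$ is unnecessary, since $d\le 256\gamma^2$ holds unconditionally.
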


\begin{proof}
Apply Proposition~\ref{prop:near} with $\alpha=1/8$, so that ``$2\alpha$''
there becomes $1/4$, and bound $d=\Ldim_{1/8}(A) \le 256\gamma^2$ using
Theorem~\ref{thm:martingale}; also bound $q = \lceil16M\rceil+2 \le
C(1+M)$. Substituting these into Proposition~\ref{prop:near} gives the stated
exponent, after absorbing constants into $C$.
\end{proof}

\section{The local norm-decreasing split}
\label{sec:local}

We come now to step (3): the combinatorial heart of the paper. Everything so
far has produced, from a bound on $\|A\|_{\gamma_2}$, a large set of columns
$S$ on which every row is nearly constant. We now show how to exploit this to
peel a genuine chunk of squared norm off a single column vector, while
keeping every row of the two resulting pieces close to an integer. Fix,
throughout this section, absolute constants $C_0,C_1\ge1$, chosen large
enough for the proof to work, and set
$$
T(\gamma,\epsilon) := 2\left(\frac{C_0\gamma^2}{\epsilon}\right)^{C_1\gamma^2}.
$$
This threshold function will reappear constantly: it is the size below which
the reduced row mass $\Dred$ is small enough that we should stop splitting and
instead invoke Lemma~\ref{lem:terminal-eq}.

We first isolate an elementary Hilbert-space fact, essentially just the
parallelogram law in disguise, which is what forces a genuine norm decrease
whenever we average several vectors that are not too spread out.

\begin{lemma}[A vector close to its average]
\label{lem:average}
Let $v_1,\dots,v_r$ be vectors in a Hilbert space with $\|v_i\|\le\gamma$ for
all $i$, and let $\widehat v = r^{-1}\sum_i v_i$ be their average. Then some
$i$ satisfies
$$
\|v_i - \widehat v\|^2 \le \|v_i\|^2 - \frac{\|\widehat v\|^2}{2} .
$$
\end{lemma}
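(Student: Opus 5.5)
Since some $i$ will have to satisfy the bound, the natural plan is an averaging argument: show the inequality holds on average over $i$, so it must hold for at least one $i$. Concretely, we will compute
$$
\frac1r\sum_{i=1}^r \Big(\|v_i-\widehat v\|^2 - \|v_i\|^2\Big)
$$
and show that it is at most $-\|\widehat v\|^2/2$. Then the minimizing index, or any index where the summand is at most the average, gives the claim.

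The computation is the bias--variance identity. For each $i$ we expand
$$
\|v_i-\widehat v\|^2=\|v_i\|^2-2\langle v_i,\widehat v\rangle+\|\widehat v\|^2 .
$$
Averaging over $i$ and using $r^{-1}\sum_i\langle v_i,\widehat v\rangle=\|\widehat v\|^2$, the average of $\|v_i-\widehat v\|^2-\|v_i\|^2$ equals
$$
-2\|\widehat v\|^2+\|\widehat v\|^2=-\|\widehat v\|^2 .
$$
Hence some $i$ has $\|v_i-\widehat v\|^2\le \|v_i\|^2-\|\widehat v\|^2$, which is even stronger than the stated bound with $\|\widehat v\|^2/2$. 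The slack factor $1/2$ presumably leaves room for later use, for example an approximate average or a weighted variant.

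There is no real obstacle; the hypothesis $\|v_i\|\le\gamma$ is not even needed for the inequality as stated. The one point to be careful about is that we must average the difference $\|v_i-\widehat v\|^2-\|v_i\|^2$ rather than compare averages of the two terms separately with different indices. Averaging the difference is exactly what guarantees that a single index $i$ works for both terms simultaneously.
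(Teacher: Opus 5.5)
Your proof is correct and takes the paper's route: both rest on the bias--variance identity $\frac1r\sum_i\|v_i-\widehat v\|^2=\frac1r\sum_i\|v_i\|^2-\|\widehat v\|^2$ plus an averaging step, with the paper arguing by contradiction and you arguing directly. Your observations that this actually gives the stronger bound $\|v_i-\widehat v\|^2\le\|v_i\|^2-\|\widehat v\|^2$, and that the hypothesis $\|v_i\|\le\gamma$ is not used, are also correct.
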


\begin{proof}
The identity
$$
\frac1r\sum_{i=1}^r \|v_i - \widehat v\|^2 = \frac1r\sum_{i=1}^r\|v_i\|^2 - \|\widehat v\|^2
$$
is the usual bias-variance decomposition (expand the left side and use that
$\sum_i (v_i-\widehat v) = 0$). If the desired inequality failed for
\emph{every} $i$ -- that is, if $\|v_i-\widehat v\|^2 > \|v_i\|^2 -
\tfrac12\|\widehat v\|^2$ for every $i$ -- then averaging this strict
inequality over $i$ would give
$$
\frac1r\sum_i \|v_i-\widehat v\|^2 > \frac1r\sum_i \|v_i\|^2 - \frac12\|\widehat v\|^2,
$$
which contradicts the identity above (it would force $-\|\widehat v\|^2 > -\tfrac12\|\widehat v\|^2$,
i.e.\ $\|\widehat v\|^2 < 0$). So the inequality must hold for some $i$.
\end{proof}

The point of this innocuous lemma, in context, will be to guarantee that
either the average vector $\widehat v$ is small, or else some individual
vector $v_{y_0}$ in our near-constant family sits substantially closer to
$\widehat v$ than its own norm would naively suggest -- either way, we will
be able to write $v_{y_0} = v' + v''$ with both pieces of norm strictly less
than $\gamma$.

We can now state and prove the key local lemma.

The splitting mechanism below adapts
\cite[Lemma~2.4]{BGHJN}. The sharper threshold comes from replacing the
$O(\gamma^4)$ weighted-Littlestone estimate used there by the
$O(\gamma^2)$ martingale estimate of Theorem~\ref{thm:martingale}.

\begin{lemma}[Improved norm-decreasing column split]
\label{lem:local-split}
Let
$\gamma > 1/2$ and $0<\epsilon<1/6$, and let $A \in \R^{X\times Y}$ be a
finite $\epsilon$-almost integer-valued matrix equipped with a fixed
$\gamma$-factorization $A(x,y)=\langle u_x,v_y\rangle$. Suppose that $A_\Z$
has no two equal columns, and that
$$
\max_x \sum_y |A_\Z(x,y)| \ge T(\gamma,\epsilon).
$$
Then there exist $y_0\in Y$ and vectors $v',v''$ with
$$
v_{y_0} = v'+v'', \qquad \max\{\|v'\|^2,\|v''\|^2\} \le \gamma^2 - \frac1{16},
$$
and, for every $x\in X$,
$$
\dist(\langle u_x,v'\rangle,\Z) \le 3\epsilon, \qquad \dist(\langle u_x,v''\rangle,\Z)\le3\epsilon .
$$
\end{lemma}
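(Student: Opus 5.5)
The plan is to locate a single heavy row, restrict to the columns on which that row is nonzero with a common rounded value, and apply Corollary~\ref{cor:near} there to obtain a large family $S$ of near-constant columns. I then take $v''=\widehat v$, the average of $\{v_y:y\in S\}$, and $v'=v_{y_0}-\widehat v$ for a suitable $y_0\in S$ supplied by Lemma~\ref{lem:average}. The two norm bounds come from two separate mechanisms, as sketched in Section~\ref{subsec:picture}.

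\emph{Step 1: a large, near-constant family.} Fix a row $x^*$ with $\sum_y|A_\Z(x^*,y)|\ge T(\gamma,\epsilon)$. Since $\|A\|_{\max}\le\|A\|_{\gamma_2}\le\gamma$, every entry satisfies $|A_\Z|\le\gamma+1$. Hence at least $T/(\gamma+1)$ columns have $A_\Z(x^*,y)\neq0$. By pigeonhole over the at most $2\gamma+2$ nonzero integer values, there is a set $Y_0$ with $|Y_0|\ge T/(2\gamma+2)^2$ on which $A_\Z(x^*,\cdot)\equiv c$ for some integer $c$ with $|c|\ge1$.

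Apply Corollary~\ref{cor:near} to $A|_{X\times Y_0}$ with $M=\gamma$ and $\eta:=\epsilon/(4\gamma+4)$. This produces $S\subseteq Y_0$ and $g$ such that, in every row, all but an $\eta$-fraction of $y\in S$ satisfy $|A(x,y)-g(x)|\le1/4$. We also get
\[
|S|\ge |Y_0|\Big(\tfrac{\epsilon}{C(1+\gamma)^2}\Big)^{C\gamma^2}.
\]
Choosing $C_0,C_1$ large, and using $\gamma>1/2$ to absorb the $(1+\gamma)$ factors into $\gamma^2$, the definition of $T$ then gives $|S|\ge2$. Matching the exact shape of $T$ here is the bookkeeping step I expect to be fiddliest, though it is not conceptually hard.

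\emph{Step 2: near-integrality of both pieces.} For $y$ in the good set, $|A(x,y)-g(x)|\le1/4$ and $A$ is $\epsilon$-close to integers with $\epsilon<1/6$. Since $1/4+1/6<1/2$, all good entries in row $x$ share the same rounded value $n(x)$. Then
\[
\langle u_x,\widehat v\rangle=\frac1{|S|}\sum_{y\in S}A(x,y)
\]
differs from $n(x)$ by at most $\epsilon$ from the good terms, plus at most $\eta\cdot(2\gamma+2)\le\epsilon/2$ from the exceptional terms. So $\dist(\langle u_x,v''\rangle,\Z)\le 2\epsilon$. For $v'$, the same bound plus $\epsilon$ for $\langle u_x,v_{y_0}\rangle$ gives $\dist(\langle u_x,v'\rangle,\Z)\le 3\epsilon$. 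This holds for every $y_0\in S$, so the choice of $y_0$ is free for the norm step.

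\emph{Step 3: both norms drop.} On row $x^*$ every $y\in S$ has $A_\Z=c$, so $|\langle u_{x^*},\widehat v\rangle|\ge1-\epsilon\ge 5/6$. Hence $\|\widehat v\|^2\ge 25/36$, and Lemma~\ref{lem:average} yields some $y_0\in S$ with
\[
\|v_{y_0}-\widehat v\|^2\le\gamma^2-\tfrac{25}{72}\le\gamma^2-\tfrac1{16}.
\]
For $\widehat v$ itself, I use that distinct columns of $A_\Z$ differ by at least $1$ in some row $x$. Then $|\langle u_x,v_y-v_{y'}\rangle|\ge1-2\epsilon\ge2/3$, and since $\|u_x\|\le1$, $\|v_y-v_{y'}\|^2\ge4/9$ for $y\ne y'$. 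The pairwise form of the bias--variance identity gives
\[
\gamma^2-\|\widehat v\|^2\ \ge\ \frac1{|S|}\sum_{y\in S}\|v_y-\widehat v\|^2=\frac1{2|S|^2}\sum_{y,y'\in S}\|v_y-v_{y'}\|^2\ \ge\ \frac{|S|-1}{2|S|}\cdot\frac49\ \ge\ \frac19.
\]
The last inequality uses $|S|\ge2$, and $1/9>1/16$, which completes the split.
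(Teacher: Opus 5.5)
Your proposal is correct and follows essentially the same route as the paper: a heavy row, pigeonholing onto a common nonzero rounded value, and Corollary~\ref{cor:near} with $\eta$ of order $\epsilon/\gamma$ to get a near-constant set of size at least $2$. You then take $v''=\widehat v$ and $v'=v_{y_0}-\widehat v$, with Lemma~\ref{lem:average} bounding $v'$ and pairwise separation plus the bias--variance identity bounding $\widehat v$, and the differences from the paper are cosmetic (you use $M=\gamma$ instead of $\|A\|_{\max}$, and exploit $\epsilon<1/6$ to get the separation constants $4/9$ and $1/9$ where the paper has $1/4$ and $1/16$).
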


\begin{proof}
The strategy is: find a row $x_0$ with enormous reduced mass; extract, from
that row, a huge class $S$ of columns that all take the same rounded value
$b\ne0$ in row $x_0$; shrink $S$, using Corollary~\ref{cor:near}, to a subset
$S'$ that is near-constant \emph{in every row simultaneously}; then average
the vectors $v_y$ over $S'$, and apply Lemma~\ref{lem:average}. Because the rounded columns indexed by $S'$ are pairwise distinct
(as full columns, although they agree at $x_0$), their representing vectors
are separated in Hilbert space; this average cannot be too large, forcing
the desired norm decrease; and because $S'$ is near-constant on every row,
the average $\widehat v$ is nearly as good as any single column of $S'$ at
reproducing near-integer values in every row -- which is what lets us split
$v_{y_0}$ into two pieces that are \emph{both} still almost-integer-valued.
We now carry this out.

Write $M=\|A\|_{\max}$ and $D = \max_x\sum_y|A_\Z(x,y)|\ (\ge T(\gamma,\epsilon)>0)$.
Since $D>0$, some rounded entry of $A$ is nonzero, and an entry rounding to a
nonzero integer must itself have absolute value at least $1-\epsilon$
(else it would round to $0$); so $M \ge 1-\epsilon > 1/2$. On the other hand
$M \le \|A\|_{\gamma_2} \le \gamma$ by the factorization.

Fix a row $x_0$ realizing $D$. Every nonzero rounded entry of row $x_0$ has
absolute value at most $M+1/2 \le 2M$ (an entry rounding to integer $k$ lies
within $\epsilon<1/2$ of $k$, and is itself bounded by $M$, so $|k|\le
M+1/2$). Hence row $x_0$ has at least $D/(2M)$ nonzero entries. Since these
nonzero rounded entries take at most $8M$ possible integer values (they lie
in $[-2M,2M]\setminus\{0\}$, and $2M\ge1$), by pigeonhole some fixed nonzero
integer $b$ is attained on a set
$$
S = \{y : A_\Z(x_0,y) = b\}, \qquad |S| \ge \frac{D}{16M^2} .
$$

Set $\eta = \epsilon/(4M)$, and apply Corollary~\ref{cor:near} to the
restriction of $A$ to $X\times S$ (still of $\gamma_2$-norm at most $\gamma$
and max-norm at most $M$, since restricting a matrix cannot increase either
norm). We obtain $S'\subseteq S$ and $g': X\to[-M,M]$ with
$$
|S'| \ge |S|\left(\frac{\epsilon}{CM(1+M)}\right)^{C\gamma^2} \ge \frac{D}{16M^2}\left(\frac{\epsilon}{CM(1+M)}\right)^{C\gamma^2},
$$
and, for every $x$, at most an $\eta$-fraction of $S'$ has
$|A(x,y)-g'(x)|>1/4$. Since $D>0$ we already know $M\ge1-\epsilon>5/6$, and hence
$\gamma\ge5/6$. Thus
$$
 M(1+M)\le \gamma(1+\gamma)\le 3\gamma^2.
$$
Using $D\ge T(\gamma,\epsilon)$ in the preceding display, we obtain
$$
 |S'|
 \ge \frac1{8\gamma^2}
 \left(\frac{C_0\gamma^2}{\epsilon}\right)^{C_1\gamma^2}
 \left(\frac{\epsilon}{C'\gamma^2}\right)^{C\gamma^2}
$$
for an absolute $C'$. Choosing first $C_1>C$ and then $C_0$ sufficiently
large makes this quantity at least $2$ uniformly for
$\gamma\ge5/6$ and $0<\epsilon<1/6$. Hence $|S'|\ge2$.

Let $g(x)$ be a nearest integer to $g'(x)$. If
$|A(x,y)-g'(x)|\le1/4$, put $k=A_\Z(x,y)$. Since $A$ is
$\epsilon$-almost integer-valued,
$$
 |g'(x)-k|
 \le |g'(x)-A(x,y)|+|A(x,y)-k|
 \le \frac14+\epsilon
 < \frac12 .
$$
Thus $k$ is the unique nearest integer to $g'(x)$, so $k=g(x)$. Hence
$$
\Pr_{y\in S'}[A_\Z(x,y)\ne g(x)] \le \eta \qquad \text{for every } x.
$$

Now let $\widehat v := |S'|^{-1}\sum_{y\in S'} v_y$. Because every $y\in S'$ lies in $S$, we have
$A_\Z(x_0,y)=b\ne0$ for all such $y$. Hence
$$
\big|\langle u_{x_0},\widehat v\rangle-b\big|
 =\Big|\frac1{|S'|}\sum_{y\in S'}(A(x_0,y)-b)\Big|
 \le \epsilon .
$$
Since $|b|\ge1$, it follows that
$$
|\langle u_{x_0},\widehat v\rangle| \ge 1-\epsilon > \tfrac12,
$$
so $\|\widehat v\| \ge |\langle u_{x_0},\widehat v\rangle| > 1/2$ (using
$\|u_{x_0}\|\le1$ and Cauchy--Schwarz). By Lemma~\ref{lem:average} applied to
the family $\{v_y\}_{y\in S'}$, there is $y_0\in S'$ with
$$
\|v_{y_0}-\widehat v\|^2 \le \|v_{y_0}\|^2 - \tfrac12\|\widehat v\|^2 \le \gamma^2 - \tfrac18 .
$$

It remains to bound $\|\widehat v\|^2$ itself away from $\gamma^2$, and for
this we use that the columns of $S'$ are pairwise \emph{distinct} rounded
columns of a matrix with no duplicate rounded columns: any two $y,y'\in
S'\subseteq Y$ differ in $A_\Z$ in some row, so $|A(x,y)-A(x,y')|\ge1-2\epsilon>1/2$ for that row. Since
$$
 A(x,y)-A(x,y')=\langle u_x,v_y-v_{y'}\rangle
$$
and $\|u_x\|\le1$, Cauchy--Schwarz gives
$\|v_y-v_{y'}\|\ge |A(x,y)-A(x,y')|>1/2$. The pairwise form of the bias-variance identity,
$$
\frac{1}{2|S'|^2}\sum_{y,y'\in S'} \|v_y-v_{y'}\|^2 = \frac1{|S'|}\sum_{y\in S'}\|v_y\|^2 - \|\widehat v\|^2,
$$
(obtained by expanding both sides and comparing) then gives, using
$\|v_y-v_{y'}\|^2>1/4$ for the $|S'|(|S'|-1)$ ordered pairs with $y\ne y'$
and $\|v_y\|^2\le\gamma^2$,
$$
\|\widehat v\|^2 \le \gamma^2 - \frac{|S'|-1}{8|S'|} \le \gamma^2 - \frac1{16},
$$
the last step using $|S'|\ge2$.

Set $v'' := \widehat v$ and $v' := v_{y_0}-\widehat v$. We have just shown
$\|v''\|^2 \le \gamma^2-1/16$ and (from the previous paragraph)
$\|v'\|^2 \le \gamma^2-1/8 \le \gamma^2-1/16$, so both norm bounds hold.

Finally we check almost-integrality of $\langle u_x,v'\rangle$ and
$\langle u_x,v''\rangle$ for every $x\in X$. On the $(1-\eta)$-fraction of
$S'$ where $A_\Z(x,y)=g(x)$, we have $|A(x,y)-g(x)|\le\epsilon$; on the
remaining $\eta$-fraction, we only know $|A(x,y)-g(x)| \le |A(x,y)|+|g(x)|
\le M + (M+1) \le 2M+1$ crudely. Averaging over $S'$,
$$
|\langle u_x,v''\rangle - g(x)| = \Big|\frac1{|S'|}\sum_{y\in S'}\big(A(x,y)-g(x)\big)\Big| \le (1-\eta)\epsilon + \eta(2M+1) \le \epsilon + \eta(2M+1) .
$$
Recalling $\eta = \epsilon/(4M)$, we get $\eta(2M+1) \le \epsilon(2M+1)/(4M)
\le \epsilon$ (using $M\ge1/2$, so $2M+1\le 4M$ once $M\ge1/2$: indeed
$2M+1 \le 4M \iff 1\le 2M \iff M\ge1/2$, which holds). Hence $|\langle
u_x,v''\rangle - g(x)| \le 2\epsilon$, so $\dist(\langle u_x,v''\rangle,\Z)
\le 2\epsilon \le 3\epsilon$.

For $v'$: since $v_{y_0}=v'+v''$,
$$
\langle u_x,v'\rangle = A(x,y_0) - \langle u_x,v''\rangle = \big(A(x,y_0)-g(x)\big) - \big(\langle u_x,v''\rangle - g(x)\big).
$$
The first bracket is within $\epsilon$ of the integer
$A_\Z(x,y_0)-g(x)$, while the second bracket has absolute value at most
$2\epsilon$. Therefore $\langle u_x,v'\rangle$ is within $3\epsilon$ of
the integer $A_\Z(x,y_0)-g(x)$, and hence
$\dist(\langle u_x,v'\rangle,\Z) \le 3\epsilon$, completing the proof.
\end{proof}

The two halves $v',v''$ produced by this lemma are, informally, ``$v_{y_0}$
minus its near-constant average'' and ``the average itself''. Both improve
the ambient squared column-norm bound by a universal amount, $1/16$, and,
crucially, both are still nearly integer-valued when tested against
\emph{every} row $x$, not just the rows used to find the near-constant set.
This is exactly the local move that will be iterated, and transported around
group orbits, in the next section.

\section{The equivariant recursion}
\label{sec:equivariant}

We now assemble the pieces (Sections~\ref{sec:equality}--\ref{sec:local})
into a proof of Theorem~\ref{thm:master}. As foreshadowed in
Section~\ref{subsec:picture}, the only new idea needed is a way of performing
the local split of Lemma~\ref{lem:local-split} \emph{compatibly with the
group action}, and the mechanism for this is transport around a single free
orbit, which we now describe.

\subsection{Splitting one orbit}

\begin{lemma}[Splitting one orbit]
\label{lem:orbit-split}
Let $\Gamma$ act on $X$ and freely on $Y$. Let $A: X\times Y\to\R$ be
$\epsilon$-almost integer-valued, $0<\epsilon<1/32$, with an equivariant
$\gamma$-factorization. If $\Dred(A) > T(\gamma,\epsilon)$, then there is a
$\Gamma$-orbit $\mathcal O \subseteq Y$ and equivariant column families
$\{v_y^{(1)}\}_{y\in\mathcal O}$, $\{v_y^{(2)}\}_{y\in\mathcal O}$ such that
$$
v_y = v_y^{(1)}+v_y^{(2)} \quad (y\in\mathcal O), \qquad \|v_y^{(j)}\|^2 \le \gamma^2-\tfrac1{16}\ (j=1,2),
$$
and $\dist(\langle u_x,v_y^{(j)}\rangle,\Z)\le3\epsilon$ for every $x\in X$
and $y\in\mathcal O$.
\end{lemma}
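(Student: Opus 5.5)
The plan is to reduce to the finite, non-equivariant Lemma~\ref{lem:local-split} on a suitable finite submatrix, produce a split at a single column $y_0$, and then transport it around the orbit of $y_0$ using the representation $R$.

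\textbf{Step 1: a finite witness.} Since $\Dred(A)=\Dred(A_\Z)>T(\gamma,\epsilon)$, there is a row $x_0$ and finitely many distinct column classes $C_1,\dots,C_m$ of $A_\Z$ with $\sum_k|A_\Z(x_0,C_k)|\ge T(\gamma,\epsilon)$. This uses the convention that the sum is a supremum of finite subsums. Pick one representative column $y_k\in C_k$ for each $k$ and put $Y_0=\{y_1,\dots,y_m\}$.

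The rows need more care, because Lemma~\ref{lem:local-split} is stated for finite matrices while $X$ may be infinite. For each pair $k\ne l$, choose one row $x_{kl}$ at which $A_\Z(\cdot,y_k)\ne A_\Z(\cdot,y_l)$. Let $X_0$ consist of $x_0$ and all the $x_{kl}$. Then $A|_{X_0\times Y_0}$ is a finite, $\epsilon$-almost integer-valued matrix. Its rounding has no two equal columns, and its row $x_0$ has mass at least $T$. It also inherits the factorization $\langle u_x,v_y\rangle$ with the same bounds.

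\textbf{Step 2: the local split, checked on all rows.} The conclusion of Lemma~\ref{lem:local-split} only controls $\dist(\langle u_x,\cdot\rangle,\Z)$ for $x\in X_0$. We need it for every $x\in X$, and this is the main obstacle. I would handle it by inspecting the proof rather than quoting the statement.

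In that proof, $\widehat v$ is an average of $v_y$ over $S'\subseteq Y_0$. Near-constancy on each row came from Corollary~\ref{cor:near}, and Proposition~\ref{prop:near} allows an arbitrary row set $X$ with $Y$ finite. So I would apply Corollary~\ref{cor:near} to $A|_{X\times S}$, with all of $X$ as rows and $S\subseteq Y_0$ finite. Only finiteness of the column set was actually used: the $\gamma_2$ and max-norm bounds restrict, and the Littlestone bound of Theorem~\ref{thm:martingale} holds for arbitrary $X$.

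The roles of the two row sets then separate cleanly. Distinctness of the rounded columns only needs to be witnessed on $X_0$. The almost-integrality estimates then go through verbatim for every $x\in X$, as does the $(1-\eta)$-majority argument giving $|\langle u_x,\widehat v\rangle-g(x)|\le2\epsilon$. The outcome is $y_0\in Y_0$ and $v',v''$ with $v_{y_0}=v'+v''$, both of squared norm at most $\gamma^2-1/16$, and both $3\epsilon$-close to integers against all $u_x$.

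\textbf{Step 3: equivariant transport.} Let $\mathcal O=\Gamma y_0$. By freeness, each $y\in\mathcal O$ is $gy_0$ for a unique $g$, so
\[
v^{(1)}_{gy_0}:=R_gv',\qquad v^{(2)}_{gy_0}:=R_gv''
\]
is well defined and equivariant.

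The required properties transfer as follows.
\begin{itemize}
\item \emph{Sum:} $v^{(1)}_{gy_0}+v^{(2)}_{gy_0}=R_gv_{y_0}=v_{gy_0}$.
\item \emph{Norms:} $R_g$ is orthogonal, so the norm bounds are preserved.
\item \emph{Almost-integrality:} for any $x$ we have $\langle u_x,R_gv'\rangle=\langle R_{g^{-1}}u_x,v'\rangle=\langle u_{g^{-1}x},v'\rangle$, which lies within $3\epsilon$ of $\Z$ by Step 2 applied to the row $g^{-1}x$.
\end{itemize}
This last point is exactly why Step 2 must hold for all rows and not merely those in $X_0$. The hypothesis $\epsilon<1/32$ is stronger than the $1/6$ needed by Lemma~\ref{lem:local-split}, so it can be used as is.
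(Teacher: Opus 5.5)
Your proof is correct, but at the step you identified as the main obstacle it takes a different route from the paper. The paper treats Lemma~\ref{lem:local-split} as a black box for finite matrices and gets from finite row sets to all of $X$ by compactness:
\begin{itemize}
\item it applies the lemma to $A|_{X'\times Y_0}$ for every finite $X'\supseteq X_0$;
\item it passes to a subnet on which the chosen column $y_{X'}$ is constant and the split vectors converge weakly in the ball of radius $\sqrt{\gamma^2-1/16}$;
\item it transfers the sum identity, the norm bound (by weak lower semicontinuity) and the almost-integrality for each fixed row (by weak continuity of $\langle u_x,\cdot\rangle$) to the limit.
\end{itemize}
You instead observe that the finite-row hypothesis in Lemma~\ref{lem:local-split} is never actually used. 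Theorem~\ref{thm:martingale}, Proposition~\ref{prop:near} and Corollary~\ref{cor:near} only require the column set to be finite, and every other estimate in that proof is checked row by row. So the argument runs directly on $A|_{X\times Y_0}$.

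Your route is more elementary and more explicit. It needs no nets or weak compactness, and the split is concrete: $v''=\widehat v$ is an average of finitely many $v_y$, and $v'=v_{y_0}-\widehat v$. The price is that you must audit the proof of the local lemma rather than quote its statement, whereas the paper's route needs only the statement.

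One small redundancy: in your version the separating row set $X_0$ is unnecessary. Since you use all of $X$ as rows, the columns in $Y_0$ are already pairwise distinct in $A_\Z$, because they represent distinct column classes.
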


\begin{proof}
Since $\Dred(A)>T(\gamma,\epsilon)>0$, some rounded entry is nonzero. Hence
$\gamma\ge\|A\|_{\max}\ge1-\epsilon>1/2$, so the hypothesis
$\gamma>1/2$ needed in Lemma~\ref{lem:local-split} is automatic. By
definition of reduced row mass there is a row $x_0$ and a \emph{finite} collection of distinct rounded-column
classes whose masses in row $x_0$ already sum to more than
$T(\gamma,\epsilon)$. Choose one representative column from each of these
classes; call the resulting finite set $Y_0\subseteq Y$. Because the classes
were chosen distinct, the rounded columns restricted to $Y_0$ are pairwise
different \emph{somewhere}; so there is a finite set of rows $X_0 \ni x_0$
that separates every pair of columns in $Y_0$ (for each of the finitely many
pairs, keep one witnessing row).

We would like to apply Lemma~\ref{lem:local-split} directly to $A$ restricted
to $X\times Y_0$, but the lemma requires the ambient row set to be finite,
whereas $X$ itself may be infinite. We get around this by a compactness
argument: for every \emph{finite} superset $X'\supseteq X_0$, apply
Lemma~\ref{lem:local-split} to the finite matrix $A|_{X'\times Y_0}$ (which
still has no duplicate rounded columns, since $X_0\subseteq X'$ already
separates them, and still has reduced row mass exceeding $T(\gamma,\epsilon)$
at $x_0$). This produces some $y_{X'}\in Y_0$ and vectors $w_{1,X'}, w_{2,X'}$
satisfying the norm-decrease and almost-integrality conclusions, but only
verified so far on the rows in $X'$.

Order the finite supersets $X'$ of $X_0$ by inclusion; this is a directed set.
Since $Y_0$ is finite, we may pass to a subnet along which the chosen column
$y_{X'}$ is eventually constant, equal to some $y_0\in Y_0$. The vectors
$w_{1,X'}$ and $w_{2,X'}$ all lie in the closed ball of radius
$\sqrt{\gamma^2-1/16}$, which is weakly compact (closed bounded balls in
Hilbert space are weakly compact); so, passing to a further subnet, we may
assume $w_{1,X'}\rightharpoonup w_1$ and $w_{2,X'}\rightharpoonup w_2$
weakly.

We now pass the conclusions of Lemma~\ref{lem:local-split} to this weak
limit. The equation $v_{y_0} = w_{1,X'}+w_{2,X'}$, valid along the net,
passes to the limit by weak continuity of addition, giving
$v_{y_0}=w_1+w_2$. The norm bound $\|w_{j,X'}\|^2 \le \gamma^2-1/16$ passes
to the limit by weak lower semicontinuity of the norm (a standard fact: if
$w_{j,X'}\rightharpoonup w_j$ then $\|w_j\|\le\liminf\|w_{j,X'}\|$), giving
$\|w_j\|^2\le\gamma^2-1/16$. Finally, for each \emph{fixed} $x\in X$, once
$X'\supseteq\{x\}$ we have from the lemma that
$\dist(\langle u_x,w_{j,X'}\rangle,\Z)\le3\epsilon$; since
$\langle u_x,\cdot\rangle$ is weakly continuous and $\dist(\cdot,\Z)$ is
continuous, this bound also passes to the limit, and it holds for
\emph{every} $x\in X$ because $x$ was an arbitrary fixed point (once in the
directed set, it stays in every larger $X'$).

We have therefore produced $y_0\in Y$ and $w_1,w_2$ with $v_{y_0}=w_1+w_2$,
$\|w_j\|^2\le\gamma^2-1/16$, and $\dist(\langle u_x,w_j\rangle,\Z)\le3\epsilon$
for every $x\in X$.

Finally, transport this single split around the whole orbit
$\mathcal O = \Gamma y_0$. Because the action on $Y$ is free, every
$y\in\mathcal O$ has a \emph{unique} representation $y=gy_0$; define
$v_{gy_0}^{(j)} := R_g w_j$. This is well-defined (no ambiguity in $g$), and
$$
v_{gy_0} = R_g v_{y_0} = R_g w_1 + R_g w_2 = v^{(1)}_{gy_0}+v^{(2)}_{gy_0},
$$
using equivariance of the original factorization, and the norms are
preserved since $R_g$ is orthogonal: $\|R_gw_j\| = \|w_j\|$. For
almost-integrality at a general point $x\in X$ and $y=gy_0\in\mathcal O$,
$$
\langle u_x, v_{gy_0}^{(j)}\rangle = \langle u_x, R_gw_j\rangle = \langle R_g^{-1}u_x, w_j\rangle = \langle u_{g^{-1}x}, w_j\rangle,
$$
using that $R_g$ is orthogonal (so $R_g^*=R_g^{-1}$) and equivariance
$u_{g^{-1}x}=R_g^{-1}u_x$. Since $g^{-1}x$ ranges over $X$ as $x$ does, this
is within $3\epsilon$ of an integer by the bound already established for all
of $X$. So the almost-integrality conclusion holds at \emph{every} point of
$X\times\mathcal O$, as claimed.
\end{proof}

The role of the compactness step is worth emphasizing. It allows us to deduce
the infinitary statement, for instance the group specialization when $G$ is
infinite, from Lemma~\ref{lem:local-split}, which is stated and proved only
for finite matrices. We only need finitely many rows at a time, and weak
compactness passes these finite conclusions to a global vector.

\subsection{One step of the recursion}

The following orbitwise decomposition is the equivariant counterpart of the
decomposition step in \cite[Lemma~2.5]{BGHJN}.

\begin{lemma}[One equivariant decomposition step]
\label{lem:eq-step}
Let $\Gamma$ act freely on $X$ and $Y$, with $Y/\Gamma$ finite. Let
$A:X\times Y\to\R$ be $\epsilon$-almost integer-valued, $0<\epsilon<1/32$,
with an equivariant $\gamma$-factorization. Then
$$
A_\Z = K_1+K_2+F,
$$
where each $K_j = (A_j)_\Z$ for an equivariant $3\epsilon$-almost
integer-valued kernel $A_j$ with an equivariant factorization satisfying
$\sup_y\|v_y^{(j)}\|^2 \le \gamma^2-1/16$, and
$$
\eqc_\Gamma(F) \le 2T(\gamma,\epsilon),
$$
where $\eqc_\Gamma(F)$ denotes the least number of signed
$\Gamma$-equivariant equality kernels needed to write $F$.
\end{lemma}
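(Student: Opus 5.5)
We will prove the decomposition by processing the finitely many column orbits one at a time, in the style of \cite[Lemma~2.5]{BGHJN}. Let $\mathcal O_1,\dots,\mathcal O_m$ enumerate $Y/\Gamma$. We will build two equivariant column families $\{v^{(1)}_y\}_{y\in Y}$ and $\{v^{(2)}_y\}_{y\in Y}$ together with a set $Y_F\subseteq Y$ of ``residual'' columns, which will be a union of orbits. The columns of $F$ will be supported on $Y_F$, and on $Y\setminus Y_F$ we will have $v_y=v^{(1)}_y+v^{(2)}_y$ with both pieces satisfying the conclusions of Lemma~\ref{lem:orbit-split}. Then we set $A_j(x,y)=\langle u_x,v^{(j)}_y\rangle$ for $y\notin Y_F$ and $A_j(x,y)=0$ (that is, $v^{(j)}_y=0$) for $y\in Y_F$. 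This makes each $A_j$ equivariant, $3\epsilon$-almost integer-valued, and gives it squared column norms at most $\gamma^2-1/16$. Off $Y_F$, each entry $A(x,y)$ is within $\epsilon$ of $A_\Z(x,y)$, and each $A_j(x,y)$ is within $3\epsilon$ of $(A_j)_\Z(x,y)$. So $A_\Z-(A_1)_\Z-(A_2)_\Z$ is an integer within $7\epsilon<1$ of $0$, hence equal to $0$. Consequently $F:=A_\Z-K_1-K_2$ equals $A_\Z$ restricted to $X\times Y_F$ and vanishes elsewhere, and it is $\Gamma$-invariant.

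We choose $Y_F$ greedily. Start with $Y_F=Y$ and $v^{(j)}\equiv0$. While $\Dred$ of $A$ restricted to $X\times Y_F$ exceeds $T(\gamma,\epsilon)$, apply Lemma~\ref{lem:orbit-split} to that restriction. This is legitimate: the action on $Y_F$ is still free, and the restricted factorization is still equivariant. The lemma returns an orbit $\mathcal O\subseteq Y_F$ with an equivariant split. We record that split on $\mathcal O$ and remove $\mathcal O$ from $Y_F$. Since $Y/\Gamma$ is finite, this loop stops after at most $m$ rounds.

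When it stops, $\Dred(A_\Z|_{X\times Y_F})\le T(\gamma,\epsilon)$. Extending by zero columns changes $\Dred$ by nothing, because all zero columns form a single class of mass $0$. Therefore $\Dred(F)\le T(\gamma,\epsilon)$, and Lemma~\ref{lem:terminal-eq}, which uses freeness on $X$, gives $\eqc_\Gamma(F)\le 2T(\gamma,\epsilon)$.

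The step needing the most care is checking that the pieces assemble into genuine equivariant factorizations of kernels with the stated properties. Equivariance holds orbitwise because each split is transported by $R_g$, and zero vectors are trivially equivariant. Rows keep $\|u_x\|\le1$. The only subtle points are that the $\Dred$ threshold is compared on the restricted kernel at every stage, and that when $\Dred$ is infinite the loop still only terminates through finiteness of orbits. Both are handled by the construction above.
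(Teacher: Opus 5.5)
Your proposal is correct and follows the paper's proof essentially step for step. Both arguments process orbits greedily by applying Lemma~\ref{lem:orbit-split} to the restriction to the unprocessed orbits, set $A_j=0$ on those orbits, use the integrality argument with total error $7\epsilon<1$ on split orbits, and finish by applying Lemma~\ref{lem:terminal-eq} to the residual $F$. Your remark that padding with zero columns cannot increase $\Dred$ is a slightly more explicit version of the paper's restriction argument for $\Dred(F)\le T(\gamma,\epsilon)$.
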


\begin{proof}
Think of every $\Gamma$-orbit of columns as either ``processed'' or
``unprocessed'', starting with all orbits unprocessed. While the restriction
of $A$ to the union of the currently unprocessed orbits still has reduced row
mass exceeding $T(\gamma,\epsilon)$, apply Lemma~\ref{lem:orbit-split} to that
restriction: it selects some unprocessed orbit and splits its column vectors
into two equivariant families, which we append to two growing families
$\{v_y^{(1)}\}$ and $\{v_y^{(2)}\}$ (defined so far only on processed
orbits), and we mark that orbit as processed.

Because $Y/\Gamma$ is finite, there are only finitely many orbits to process,
so this loop terminates after finitely many iterations. When it terminates,
either every orbit has been processed, or the restriction to the remaining
unprocessed orbits has reduced row mass at most $T(\gamma,\epsilon)$.

On the union of the processed (split) orbits, define $A_1,A_2$ using the
recorded families $v^{(1)},v^{(2)}$; on the unprocessed orbits, set both to
zero. Each $A_j$ is then $3\epsilon$-almost integer-valued (by the conclusion
of Lemma~\ref{lem:orbit-split} on split orbits, and trivially on unprocessed
orbits, where it is exactly $0$), equivariant (each split was performed
equivariantly, orbit by orbit), and satisfies the norm bound $\sup_y
\|v_y^{(j)}\|^2 \le \gamma^2-1/16$ (again by Lemma~\ref{lem:orbit-split} on
split orbits, trivially elsewhere).

On every split orbit we have the exact identity $A = A_1+A_2$ (this is how
the split was constructed); consequently, using the triangle inequality and
the almost-integrality bounds $\|A-A_\Z\|_{\max}\le\epsilon$ and
$\|A_j-(A_j)_\Z\|_{\max}\le3\epsilon$,
$$
\big| A_\Z - (A_1)_\Z - (A_2)_\Z \big|
\le |A_\Z - A| + |A - A_1 - A_2| + |A_1-(A_1)_\Z| + |A_2-(A_2)_\Z|
\le \epsilon + 0 + 3\epsilon+3\epsilon < 1
$$
pointwise on every split orbit. The left-hand side is an integer (a
difference of integers), and an integer of absolute value less than $1$ must
be $0$. So $A_\Z = K_1+K_2$ exactly on every split orbit, where
$K_j=(A_j)_\Z$.

On the unprocessed orbits, $K_1=K_2=0$ by definition, so there
$A_\Z=K_1+K_2+F$ with $F:=A_\Z$ restricted to those orbits (and $F=0$ on
split orbits). By the termination condition, $\Dred(F)\le T(\gamma,\epsilon)$
(reduced row mass can only decrease under restriction to a subset of columns,
so the mass of $F$, supported on the unprocessed orbits, is controlled by the
mass of $A$ restricted to those same orbits). $F$ is $\Gamma$-invariant
(being a restriction, to a union of orbits, of the invariant kernel $A_\Z$)
and $\Gamma$ acts freely on $X$ (inherited from the hypothesis on $X$), so
Lemma~\ref{lem:terminal-eq} applies and gives $\eqc_\Gamma(F) \le
2\Dred(F) \le 2T(\gamma,\epsilon)$.
\end{proof}

\subsection{Proof of the master theorem}

We are now ready to prove Theorem~\ref{thm:master}, by iterating
Lemma~\ref{lem:eq-step}: each application either terminates a branch (if the
residual reduced row mass was already small) or splits it into two children,
each of which has strictly smaller squared factorization norm. Since the
norm cannot decrease indefinitely, the recursion tree has bounded depth, and
we simply sum the (bounded) equality complexity picked up at every node.

\begin{proof}[Proof of Theorem~\ref{thm:master}]
If $K=0$ there is nothing to prove, so assume $K\ne0$. Since $K$ is
integer-valued and nonzero, $\|K\|_{\max}\ge1$, and therefore the initial
column-norm parameter satisfies $\gamma\ge1$. Set
$$
 \epsilon_0:=2^{-32\gamma^2-5}.
$$

It is convenient to keep track of what the recursive objects actually are.
A node carries a pair $(A,K_A)$, where $A$ has an equivariant factorization,
is $\epsilon$-almost integer-valued for some current error $\epsilon$, and
$K_A:=A_\Z$ is the integer kernel to be reconstructed. At the root we take $A=K$, so $K_A=K$. Although the actual rounding
error is $0$, we use $\epsilon_0$ as the bookkeeping error parameter in the
first application of Lemma~\ref{lem:eq-step}. If the current factorization has
$$
 \|u_x\|\le1,\qquad \|v_y\|\le\rho ,
$$
we call $\rho$ the current column-norm parameter.

Apply Lemma~\ref{lem:eq-step} to $A$. It gives
$$
 K_A=(A_1)_\Z+(A_2)_\Z+F,
$$
where $F$ is already a signed sum of at most $2T(\rho,\epsilon)$ equivariant
equality kernels, while each child with $(A_j)_\Z\ne0$ is $3\epsilon$-almost
integer-valued and has column-norm parameter $\rho_j$ satisfying
$$
 \rho_j^2\le \rho^2-\frac1{16}.
$$
Thus the children are the pairs $(A_j,(A_j)_\Z)$, not the rounded kernels
viewed as though they themselves carried the factorization.

Along any branch, every genuine split lowers the squared column-norm
parameter by at least $1/16$. Consequently a branch contains at most
$16\gamma^2
$
genuine splits. At depth $s\le16\gamma^2$, the error is at most
$$
 3^s\epsilon_0
 \le 3^{16\gamma^2}2^{-32\gamma^2-5}
 <\frac1{32},
$$
because $16\log_2 3<26$. Hence every recursive use of
Lemma~\ref{lem:eq-step} satisfies its error hypothesis. Moreover, whenever a
node has nonzero rounded kernel, some entry of $A$ is within $\epsilon<1/32$
of a nonzero integer, so
$$
 \rho\ge\|A\|_{\max}\ge1-\epsilon>\frac12.
$$
Thus the local splitting lemma is applicable whenever a split is actually
needed.

A binary tree with at most $16\gamma^2$ genuine splits on each branch has at
most $2^{16\gamma^2+1}$ nodes (a node at which no split occurs is terminal).
It remains to bound the equality complexity contributed at a node uniformly
in terms of the root parameters. Along the recursion we have
$\rho\le\gamma$ and $\epsilon\ge\epsilon_0$. Since
$C_0\gamma^2/\epsilon_0>1$,
$$
 T(\rho,\epsilon)
 \le 2\left(\frac{C_0\gamma^2}{\epsilon_0}\right)^{C_1\gamma^2}.
$$
Therefore
$$
 \log_2 T(\rho,\epsilon)
 \le O\!\left(\gamma^2\log\gamma
       +\gamma^2\log\frac1{\epsilon_0}\right)
 =O(\gamma^4).
$$
So every node contributes at most $2^{O(\gamma^4)}$ equality kernels.
Multiplying by the number of nodes gives
$$
 L\le 2^{16\gamma^2+1}\,2^{O(\gamma^4)}
   =2^{O(\gamma^4)}.
$$

Finally, the exact identity
$K_A=(A_1)_\Z+(A_2)_\Z+F$ at every internal node implies, by induction from
the leaves to the root, that the equality-kernel pieces produced throughout
the tree sum exactly to the rounded kernel at that node. At the root the
rounded kernel is $K$. This proves the theorem.
\end{proof}

\section{Matrix consequences}
\label{sec:matrix}

We record explicitly what Theorem~\ref{thm:master} says when $\Gamma$ is
trivial, one of our two motivating applications.
As explained in the introduction, for the trivial action freeness is automatic
and the column-orbit finiteness assumption reduces to finiteness of $Y$; in
the present finite-matrix application $X$ is finite as well. The trivial
action turns $\Gamma$-equivariant equality kernels into ordinary equality
kernels, which
by Proposition~\ref{prop:block-eq} are exactly blocky matrices. So
Theorem~\ref{thm:master} is, in this case, precisely
Corollary~\ref{cor:matrix}. Because $\|\cdot\|_{\gamma_2}$ coincides with the
Schur-multiplier norm on finite matrices, we may restate this for Boolean
matrices in the language of operator theory.

\begin{corollary}[Idempotent Schur multipliers]
Every finite Boolean Schur multiplier $A$ of multiplier norm at most $\gamma$
is a signed sum of at most $2^{O(\gamma^4)}$ contractive idempotent Schur
multipliers.
\end{corollary}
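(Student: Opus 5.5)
The plan is to deduce this corollary from Corollary~\ref{cor:matrix}. Along the way we verify two dictionary facts: that the Schur-multiplier norm equals $\|\cdot\|_{\gamma_2}$, and that blocky matrices are exactly the contractive idempotent Schur multipliers.

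Let $A$ be a finite Boolean matrix with Schur multiplier norm at most $\gamma$. By the Grothendieck--Haagerup characterization recalled in Section~\ref{sec:hilbert-mistake}, $\|A\|_{\gamma_2}\le\gamma$. The entries of $A$ lie in $\{0,1\}\subset\Z$, so Corollary~\ref{cor:matrix} applies. It gives
$$
A=\sum_{i=1}^L\sigma_iB_i,\qquad L\le 2^{C\gamma^4},
$$
with each $B_i$ blocky and $\sigma_i\in\{-1,1\}$.

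It then suffices to check that every blocky matrix $B$ is a contractive idempotent Schur multiplier. Idempotence is immediate, since $B$ is $0/1$-valued and so $B\circ B=B$.

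For contractivity I would use Proposition~\ref{prop:block-eq} to write $B(x,y)=1\iff\phi(x)=\psi(y)\in\Lambda$. Then factor $B$ through $\ell_2(\Lambda)$ by setting
$$
u_x=e_{\phi(x)},\qquad v_y=e_{\psi(y)},
$$
with the convention $e_\bot:=0$. This gives $B(x,y)=\langle u_x,v_y\rangle$ with $\|u_x\|,\|v_y\|\le1$, so $\|B\|_{\gamma_2}\le1$. Hence the Schur multiplier norm of $B$ is at most $1$, which is contractivity. (When $B\neq0$ the norm is exactly $1$, as expected for a nonzero idempotent.)

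The only step needing any care is the identification of the multiplier norm with $\gamma_2$. The paper takes this as standard, and I would cite it rather than reprove it. Everything else is formal bookkeeping. If desired, one could also add the converse: any contractive idempotent Schur multiplier is $0/1$-valued with $\gamma_2$-norm at most $1$, and is therefore blocky by the known characterization. With that converse the statement becomes a genuine equivalence of atoms, but it is not needed for the direction claimed.
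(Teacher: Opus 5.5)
Your proposal is correct and follows essentially the paper's route. The paper also deduces this from Corollary~\ref{cor:matrix}, using that $\|\cdot\|_{\gamma_2}$ equals the Schur-multiplier norm on finite matrices and that blocky matrices are contractive idempotent Schur multipliers. The only difference is that you verify the needed direction of that last fact yourself, via the factorization $u_x=e_{\phi(x)}$, $v_y=e_{\psi(y)}$ through $\ell_2(\Lambda)$ with $e_\bot=0$, whereas the paper simply cites it as standard.
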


Here we use the standard characterization of blocky Boolean matrices as
contractive idempotent Schur multipliers; see, for example, \cite{GH,BGHJN}.

The improvement over Beke--Goh--Hatami--Jaffe--Naylor \cite{BGHJN}, whose
bound is $2^{O(\gamma^6)}$, is entirely localized in one place: the sharper
martingale bound of Theorem~\ref{thm:martingale}. Their argument, like
ours, uses a near-constancy consequence of a bound on
$\Ldim_{1/8}(A)$; the weighted-Littlestone input underlying this step is
$O(\gamma^4)$ in the earlier approach \cite{GH,BGHJN}, whereas ours is
$O(\gamma^2)$ --
precisely the saving discussed in the remark after
Theorem~\ref{thm:martingale}. Tracing this through, the logarithm of the local splitting threshold is
$$
 \log T(\gamma,\epsilon)
 =O\!\left(\Ldim_{1/8}(A)\,
          \big(\log(1+\gamma)+\log(1/\epsilon)\big)\right).
$$
The recursion requires $\log(1/\epsilon_0)=O(\gamma^2)$. Hence the old bound
$\Ldim_{1/8}(A)=O(\gamma^4)$ gives $\log T=O(\gamma^6)$, while
Theorem~\ref{thm:martingale} gives $\log T=O(\gamma^4)$. The binary recursion
tree itself contributes only an additional $O(\gamma^2)$ to the logarithm of
the number of terms, because its depth is $O(\gamma^2)$. Thus the final
exponent improves from six to four, and the saving is exactly the quadratic
improvement in the weighted Littlestone bound.

\section{The regular group action}
\label{sec:group}

We now carry out the other specialization: $X=Y=G$, with $G$ a locally
compact group acting on itself by left translation. The work here is not to
reprove the recursion -- that has already been done, once and for all, in
Section~\ref{sec:equivariant} -- but to identify correctly (i) what an
equivariant factorization looks like for a function in $B(G)$, and (ii) what
the terminal equality kernels look like for this particular action, namely
that they are exactly indicator functions of cosets, with the acting group's
role explaining why the relevant subgroups come out \emph{open}.

\subsection{Coefficient functions give equivariant factorizations}

Recall that the Fourier--Stieltjes algebra $B(G)$ of a locally compact group
$G$ consists of the coefficient functions $g\mapsto\langle\pi(g)\xi,\eta\rangle$
of strongly continuous unitary representations $\pi$ of $G$ on a complex
Hilbert space, with norm $\|f\|_{B(G)}$ the infimum of $\|\xi\|\|\eta\|$ over
all such representations of $f$ \cite{Eymard}.

\begin{lemma}[Equivariant coefficient factorization]
\label{lem:coeff}
Let $f\in B(G)$ be real-valued and let $\gamma>\|f\|_{B(G)}$. Then there are
a real Hilbert space $\mathcal H_\R$, a strongly continuous orthogonal
representation $R:G\to O(\mathcal H_\R)$, and vectors $u_x,v_y\in
\mathcal H_\R$ (for $x,y\in G$) such that
$$
 f(x^{-1}y)=\langle u_x,v_y\rangle_\R,\qquad
 \|u_x\|\le1,\quad \|v_y\|\le\gamma,
$$
and $u_{tx}=R_tu_x$, $v_{ty}=R_tv_y$ for every $t\in G$.
\end{lemma}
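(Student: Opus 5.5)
Since $\|f\|_{B(G)}<\gamma$, we begin with a strongly continuous unitary representation $\pi$ of $G$ on a complex Hilbert space $\mathcal H$ and vectors $\xi,\eta\in\mathcal H$ satisfying $f(g)=\langle\pi(g)\xi,\eta\rangle$ and $\|\xi\|\|\eta\|<\gamma$. Rescaling the pair, $\xi\mapsto\lambda\xi$ and $\eta\mapsto\lambda^{-1}\eta$, lets us assume $\|\eta\|\le1$ and $\|\xi\|\le\gamma$. The vectors should carry the variable that the group moves on each side. Writing
$$
f(x^{-1}y)=\langle\pi(x^{-1})\pi(y)\xi,\eta\rangle=\langle\pi(y)\xi,\pi(x)\eta\rangle ,
$$
the natural choice is $u_x:=\pi(x)\eta$ and $v_y:=\pi(y)\xi$, using that $\pi(x^{-1})=\pi(x)^*$ because $\pi$ is unitary. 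With the convention that the inner product is linear in the first slot, this gives $f(x^{-1}y)=\langle v_y,u_x\rangle=\overline{\langle u_x,v_y\rangle}$. Equivariance is then immediate: $u_{tx}=\pi(t)u_x$ and $v_{ty}=\pi(t)v_y$. Unitarity gives the norm bounds $\|u_x\|=\|\eta\|\le1$ and $\|v_y\|=\|\xi\|\le\gamma$.

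Next we pass to real scalars. Let $\mathcal H_\R$ be $\mathcal H$ regarded as a real vector space, with inner product $\langle a,b\rangle_\R:=\operatorname{Re}\langle a,b\rangle$. This is a real inner product, it is complete, and it induces the same norm as the complex one. A unitary operator preserves $\operatorname{Re}\langle\cdot,\cdot\rangle$ and is invertible, so each $R_t:=\pi(t)$ is an orthogonal operator on $\mathcal H_\R$. Strong continuity carries over unchanged, since the norm is unchanged. Because $f$ is real-valued, we get
$$
f(x^{-1}y)=\operatorname{Re}f(x^{-1}y)=\operatorname{Re}\overline{\langle u_x,v_y\rangle}=\langle u_x,v_y\rangle_\R .
$$
This is the required factorization. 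The norm bounds and the relations $u_{tx}=R_tu_x$, $v_{ty}=R_tv_y$ carry over verbatim from the complex setting.

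The argument is essentially bookkeeping, and the real case is the only point where something could go wrong. Two things need checking there. First, the realification has to be applied to the same operators $\pi(t)$, so that equivariance survives the change of scalars. Second, taking real parts has to lose no information, and this is exactly where real-valuedness of $f$ enters. We should also make sure the side conventions match the paper's: the row vectors $u_x$ must have norm at most $1$ and the column vectors $v_y$ norm at most $\gamma$, which is why the rescaling is placed on $\eta$ and $\xi$ respectively. The strict inequality $\gamma>\|f\|_{B(G)}$ is what allows us to pick a near-optimal representation, since the infimum defining the norm need not be attained.
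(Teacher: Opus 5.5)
Your proposal is correct and is essentially the paper's proof. Both rescale a near-optimal coefficient realization, set $u_x=\pi(x)\eta$ and $v_y=\pi(y)\xi$, and use unitarity to get $f(x^{-1}y)=\langle v_y,u_x\rangle$. Both then pass to the realification with $R_t=\pi(t)$ and use real-valuedness of $f$ to identify $f(x^{-1}y)$ with $\operatorname{Re}\langle u_x,v_y\rangle$.
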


\begin{proof}
By the definition of the $B(G)$ norm, choose a coefficient realization
$f(g)=\langle\pi(g)\xi,\eta\rangle_\C$ with
$\|\xi\|\,\|\eta\|<\gamma$. If $f\ne0$, rescale $\xi,\eta$ so that
$\|\eta\|=1$ and $\|\xi\|<\gamma$; the case $f=0$ is trivial. Define $u_x := \pi(x)\eta$ and $v_y := \pi(y)\xi$, vectors
in the complex Hilbert space of $\pi$. Assuming, as is conventional, that
the complex inner product is linear in its first argument, we compute
$$
\langle v_y, u_x\rangle_\C = \langle \pi(y)\xi, \pi(x)\eta\rangle_\C = \langle \pi(x)^*\pi(y)\xi,\eta\rangle_\C = \langle \pi(x^{-1}y)\xi,\eta\rangle_\C = f(x^{-1}y),
$$
using unitarity of $\pi$ (so $\pi(x)^*=\pi(x)^{-1}=\pi(x^{-1})$) and the
representation property. Since $f$ is real-valued by hypothesis, the
imaginary part of $\langle v_y,u_x\rangle_\C$ vanishes identically, and the
real part of a complex inner product is exactly the real inner product
obtained by viewing the same complex Hilbert space as a real Hilbert space
$\mathcal H_\R$ (of twice the real dimension) in the usual way; write
$\langle u_x,v_y\rangle_\R$ for this real part (note we have also swapped
the order, which does not affect the real part since the real part of a
complex number equals the real part of its conjugate). So $\langle
u_x,v_y\rangle_\R = f(x^{-1}y)$.

The norms are unchanged by realification:
$\|u_x\|_\R=\|\eta\|_\C=1$ and
$\|v_y\|_\R=\|\xi\|_\C<\gamma$.
Equivariance is immediate from the representation property: $u_{tx} =
\pi(tx)\eta = \pi(t)\pi(x)\eta = \pi(t)u_x$, and $\pi(t)$, restricted to act
on the realification $\mathcal H_\R$, is exactly the orthogonal
transformation $R_t$ we want (a unitary map on a complex Hilbert space
restricts to an orthogonal map on its realification, because it preserves
the real inner product, being the real part of a preserved complex inner
product); similarly for $v_{ty}$. Strong continuity of $R$ is inherited from
strong continuity of $\pi$.
\end{proof}

This lemma allows us to apply Theorem~\ref{thm:master} to $X=Y=G$ under the
regular action: for every
$\gamma>\|f\|_{B(G)}$, an integer-valued $f\in B(G)$ gives an equivariant
$\gamma$-factorization of the kernel $K(x,y):=f(x^{-1}y)$, which is
manifestly diagonally invariant under left translation (replacing $x,y$ by $tx,ty$ leaves $x^{-1}y$ unchanged) and, since
there is only one column orbit on $Y=G$, satisfies the orbit-finiteness
hypothesis trivially.

\subsection{Why the terminal subgroups are open}

The one piece of Theorem~\ref{thm:master} that does not come for free, in
this specialization, is that the subgroups $H_i$ appearing in the final
answer should be \emph{open}: this is a genuinely topological fact, not
visible in the purely algebraic/Hilbert-space argument of
Section~\ref{sec:equivariant}, and it is exactly where the hypothesis that
$f\in B(G)$ (rather than an arbitrary function with a Hilbert factorization)
is used.

For an integer-valued function $k:G\to\Z$, define its \emph{right-period
subgroup}
$$
\PerR(k) := \{h\in G : k(zh)=k(z) \text{ for every } z\in G\}
$$
(a routine check confirms this is indeed a subgroup: it contains the
identity, and is closed under products and inverses).

\begin{lemma}[Period subgroups are open]
\label{lem:period-open}
Let $g:G\to\R$ be a coefficient of a strongly continuous orthogonal
representation (equivalently, after complexification, of a strongly
continuous unitary representation), and let $0<\delta<1/2$ be such that $g$ is
$\delta$-almost integer-valued. Then $H := \PerR(g_\Z)$ is open.
\end{lemma}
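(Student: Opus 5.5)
We will show that $H$ contains an open neighbourhood $U$ of the identity. A subgroup containing a neighbourhood of the identity is open, since $H=\bigcup_{h\in H}hU$. Write $g(z)=\langle R_z\xi,\eta\rangle$ for a strongly continuous orthogonal representation $R$ and vectors $\xi,\eta$. The key point is that right translation by $h$ changes $g$ only slightly, uniformly in $z$, once $h$ is close to the identity:
$$
|g(zh)-g(z)| = |\langle R_z(R_h\xi-\xi),\eta\rangle| \le \|R_h\xi-\xi\|\,\|\eta\|,
$$
using that $R_z$ is orthogonal. This bound does not depend on $z$.

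Put $\theta:=1-2\delta>0$. By strong continuity of $R$ at the identity, the set
$$
U:=\{h\in G:\|R_h\xi-\xi\|\,\|\eta\|<\theta\}
$$
is an open neighbourhood of $e$. If $\eta=0$ then $g\equiv0$ and $H=G$, so there is nothing to prove.

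Fix $h\in U$ and $z\in G$. Since $g$ is $\delta$-almost integer-valued, $|g(z)-g_\Z(z)|\le\delta$ and likewise at $zh$. The triangle inequality then gives
$$
|g_\Z(zh)-g_\Z(z)|\le \delta+\theta+\delta = 1 .
$$
The inequality is actually strict, because the middle term is $<\theta$. An integer of absolute value less than $1$ is $0$, so $g_\Z(zh)=g_\Z(z)$ for every $z$. Hence $h\in\PerR(g_\Z)$, so $U\subseteq H$ and $H$ is open.

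The main point to check is that the continuity estimate is uniform in $z$, which orthogonality of $R_z$ supplies. The other thing to keep straight is the margin: $\delta<1/2$ is what makes $\theta$ positive. Strictness at the last step comes from defining $U$ with a strict inequality, so the rounding convention at half-integers plays no role. Nothing from the recursion is needed; the lemma only uses the coefficient form of $g$ and strong continuity. Strong continuity also holds after realification, as in Lemma~\ref{lem:coeff}.
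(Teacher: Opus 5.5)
Your proof is correct and follows the paper's argument essentially step for step. It uses the same four ingredients: the uniform estimate $|g(zh)-g(z)|\le\|R_h\xi-\xi\|\,\|\eta\|$ from orthogonality of $R_z$, a neighbourhood $U$ of $e$ on which this bound is below $1-2\delta$, the rounding and triangle-inequality step forcing $g_\Z(zh)=g_\Z(z)$, and the fact that a subgroup containing a neighbourhood of the identity is open. Your separate treatment of $\eta=0$ is unnecessary, since your definition already gives $U=G$ in that case, but it is harmless.
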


\begin{proof}
Coefficient functions of strongly continuous unitary representations are
(right) uniformly continuous: given $\xi,\eta$ with $g(x)=\langle\pi(x)\xi,\eta\rangle$,
strong continuity of $\pi$ at the identity gives a neighbourhood $U$ of $e$
such that $\|\pi(h)\xi-\xi\|<\varepsilon'$ for $h\in U$, and then
$$
|g(xh)-g(x)| = |\langle \pi(x)(\pi(h)\xi-\xi),\eta\rangle| \le \|\pi(h)\xi-\xi\|\,\|\eta\| < \varepsilon'\|\eta\|
$$
uniformly in $x$ (using that $\pi(x)$ is unitary, hence norm-preserving).
Choosing $\varepsilon'$ small enough that $\varepsilon'\|\eta\| < 1-2\delta$
(possible since $\delta<1/2$), we obtain a neighbourhood $U$ of the identity
with
$$
\sup_x |g(xh)-g(x)| < 1-2\delta \qquad \text{for all } h\in U.
$$
Now suppose $h\in U$ but $g_\Z(xh)\ne g_\Z(x)$ for some $x$: then $g_\Z(xh)$
and $g_\Z(x)$ are distinct integers, so differ by at least $1$, while
$g(xh)$ and $g(x)$ each lie within $\delta$ of their respective rounded
values; the triangle inequality then forces
$$
|g(xh)-g(x)| \ge 1 - 2\delta,
$$
contradicting the bound above. So no such $x$ exists, i.e.\ $U\subseteq H$;
since $U$ is a neighbourhood of the identity contained in the subgroup $H$,
and subgroups containing a neighbourhood of the identity are open (translate
the neighbourhood to cover every point of $H$), $H$ is open.
\end{proof}

\subsection{Duplicate columns are exactly cosets}

For $k:G\to\Z$, consider the associated convolution kernel
$M_k(x,y):=k(x^{-1}y)$.

\begin{lemma}[Duplicate columns are cosets]
\label{lem:dup-coset}
Let $H=\PerR(k)$. Two columns $y,y'\in G$ of $M_k$ are equal (as functions
of $x$) if and only if $yH=y'H$.
\end{lemma}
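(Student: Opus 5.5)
The plan is to unwind both conditions into statements about $k$ alone and observe that they coincide after a change of variables. Column $y$ of $M_k$ is the function $x\mapsto k(x^{-1}y)$. So the columns $y$ and $y'$ are equal exactly when $k(x^{-1}y)=k(x^{-1}y')$ for every $x\in G$.

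For the forward direction, assume the columns agree and set $h:=y^{-1}y'$, so that $yH=y'H$ is equivalent to $h\in H$. Given any $z\in G$, choose $x:=yz^{-1}$; this choice is possible since $x$ ranges over all of $G$. Then $x^{-1}y=z$ and $x^{-1}y'=zy^{-1}y'=zh$. The hypothesis gives $k(zh)=k(z)$ for every $z$, which means $h\in\PerR(k)=H$.

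For the converse, assume $y'=yh$ with $h\in H$. Then for every $x$ we have $k(x^{-1}y')=k\big((x^{-1}y)h\big)=k(x^{-1}y)$, where we apply the defining property of $\PerR(k)$ with $z=x^{-1}y$. So the two columns coincide.

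There is no real obstacle here. The only point needing care is the reparametrization $z=x^{-1}y$: as $x$ runs over $G$, $z$ runs bijectively over $G$, so "for all $x$" becomes "for all $z$". We also need to track left versus right consistently, so that the right-period subgroup matches the left cosets $yH$. The argument uses no topology and no integrality, so the lemma holds for any function $k$ on $G$.
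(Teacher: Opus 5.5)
Your proof is correct and follows essentially the same route as the paper: you write $y'=yh$ and reparametrize by $z=x^{-1}y$, which turns equality of the two columns into $k(zh)=k(z)$ for all $z$, i.e.\ $h\in\PerR(k)$. The only difference is that you spell out the two directions separately, whereas the paper handles both at once as a single chain of equivalences.
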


\begin{proof}
Write $y'=yh$. The columns indexed by $y$ and $y'$ agree, i.e.\
$k(x^{-1}y)=k(x^{-1}y')$ for every $x$, if and only if, substituting
$z=x^{-1}y$ (which ranges over all of $G$ as $x$ does), $k(z)=k(zh)$ for every
$z\in G$ -- which is exactly the definition of $h\in H=\PerR(k)$. So the
columns agree iff $h\in H$, i.e.\ iff $yH=y'H$.
\end{proof}

Consequently, in the regular action, the equivalence classes $\mathcal
C(M_k)$ of Section~\ref{sec:equality} are exactly the left cosets
$\{yH:y\in G\}$ (the subgroup is written on the right), and the reduced
row mass becomes
$$
\Dred(M_k) = \sum_{aH\in G/H} |k(a)| .
$$
When this is finite, $k$ is trivially a finite integer combination of coset
indicators, $k=\sum_{aH} k(a)\1_{aH}$, and expanding each integer coefficient
into a sum of signs recovers a signed coset decomposition, exactly as
Lemma~\ref{lem:terminal-eq} promises in this special case.

It remains to identify the equivariant equality kernels for the regular
action with cosets in general (not just in the terminal, small-$\Dred$
case), confirming the picture sketched in the introduction.

\begin{proposition}
A nonzero translation-invariant Boolean kernel $B(x,y)=\1_S(x^{-1}y)$, for
$S\subseteq G$, is an equality kernel if and only if $S$ is a left coset of
some subgroup of $G$.
\end{proposition}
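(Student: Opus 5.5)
The plan is to handle the two directions separately. The reverse direction is the computation from the introduction. The forward direction is a short combinatorial argument: an equality kernel whose rows are translates must come from a partition compatible with the group structure.

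\emph{If $S=aH$.} Suppose $S=aH$ for a subgroup $H\le G$. Take the label set $\Lambda=G/H$ and define $\phi(x):=xaH$ and $\psi(y):=yH$. Then
$$
\1_{aH}(x^{-1}y)=1\iff y\in xaH\iff (xa)H=yH,
$$
so $B$ is an equality kernel. No $\bot$ labels are needed. (Equivariance also holds, since $\phi(tx)=t\phi(x)$ and $\psi(ty)=t\psi(y)$, though it is not required here.)

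\emph{If $B$ is an equality kernel.} Suppose $B(x,y)=1\iff\phi(x)=\psi(y)\in\Lambda$. Row $x$ of $B$ is the set $xS$. For a fixed row $x$ with $\phi(x)=\lambda\neq\bot$, this row is exactly the fibre $\psi^{-1}(\lambda)$. Rows with $\phi(x)=\bot$ would be empty, but $xS\ne\emptyset$ because $B\ne0$, so no row has label $\bot$. Hence for any $x,x'$ the sets $xS$ and $x'S$ are fibres of $\psi$, and so they are either equal or disjoint. The key step is to deduce that $S$ is a coset from this ``translates are equal or disjoint'' property.

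To do so, fix $s_0\in S$ and set $H:=s_0^{-1}S$. Then $e\in H$, and $H$ inherits the property: $xH$ and $x'H$ are either equal or disjoint for all $x,x'$. I claim $H$ is a subgroup.
\begin{enumerate}[label=(\roman*)]
\item For $h\in H$, the translates $hH$ and $H$ share the point $h=h\cdot e$, so $hH=H$.
\item Closure under products then follows: for $h,h'\in H$ we have $hh'\in hH=H$.
\item Closure under inverses also follows: since $e\in H=hH$, we get $e=hh''$ for some $h''\in H$, so $h^{-1}=h''\in H$.
\end{enumerate}
Thus $H$ is a subgroup and $S=s_0H$ is a left coset.

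The main point needing care is the use of nonemptiness to exclude $\bot$ rows. It is also worth noting that only the row structure is used, so the converse argument does not need any topology or equivariance assumptions.
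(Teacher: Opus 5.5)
Your proof is correct and is essentially the paper's argument: both reduce (you directly from the definition of an equality kernel, the paper via Proposition~\ref{prop:block-eq}) to the fact that the translates $xS$ are pairwise equal or disjoint, and the decisive step in both is that for $s\in S$ the translate $(ss_0^{-1})S$ meets $S$ at $s$ and therefore equals $S$. The only difference is packaging: the paper takes the stabilizer $H=\{g: gS=S\}$, shows that $S=Ha$ is a right coset, and then rewrites it as the left coset $a(a^{-1}Ha)$; you instead verify directly that $s_0^{-1}S$ is a subgroup, which is the same subgroup $a^{-1}Ha=a^{-1}S$ when $s_0=a$ and spares the conjugation step.
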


\begin{proof}
($\Leftarrow$) If $S=aH$ for a subgroup $H\le G$, then
$$
\1_{aH}(x^{-1}y) = 1 \iff x^{-1}y\in aH \iff (xa)H = yH,
$$
so $B$ is the equality kernel with labels in $G/H$, given by $\phi(x)=(xa)H$
and $\psi(y)=yH$.

($\Rightarrow$) Suppose $B(x,y)=\1_S(x^{-1}y)$ is a nonzero equality kernel.
By Proposition~\ref{prop:block-eq}, $B$ is blocky: writing the row associated
to $x$ as $\{y : x^{-1}y\in S\} = xS$, blockiness says that the family of
sets $\{xS : x\in G\}$ is pairwise either identical or disjoint. Fix any
$a\in S$ (possible since $S\ne\emptyset$, as $B\ne0$), and set
$$
H := \{g\in G : gS = S\}.
$$
This is a subgroup: $eS=S$; if $gS=S$ and $g'S=S$ then $(gg')S=g(g'S)=gS=S$;
and if $gS=S$ then applying $g^{-1}$ to both sides gives $S=g^{-1}S$.

Now take any $s\in S$. The translate $(sa^{-1})S$ contains $(sa^{-1})a = s$,
and $S$ also contains $s$; so $(sa^{-1})S$ and $S$ are two members of our
pairwise-disjoint-or-equal family that share the point $s$, hence they are
equal: $(sa^{-1})S = S$. This says $sa^{-1}\in H$, i.e.\ $s\in Ha$. Since $s$
was an arbitrary element of $S$, this shows $S\subseteq Ha$.

Conversely, if $h\in H$, then $a\in S$ and $hS=S$ imply $ha\in S$.
Thus $Ha\subseteq S$. Combining the two inclusions, $S=Ha$, which is a right coset of
$H$; writing $Ha = a(a^{-1}Ha)$ exhibits it as a left coset of the conjugate
subgroup $a^{-1}Ha$, as claimed.
\end{proof}

\subsection{Proof of the non-abelian idempotent theorem}

\begin{proof}[Proof of Theorem~\ref{thm:group}]
The case $f=0$ is trivial. Put $M=\|f\|_{B(G)}$; then $M\ge1$ because $f$ is
nonzero and integer-valued. Choose any $\gamma$ with
$$
 M<\gamma\le 2M.
$$
Lemma~\ref{lem:coeff} gives an equivariant $\gamma$-factorization of
$K(x,y)=f(x^{-1}y)$ under the regular left action of $G$ on itself. This
action is free and has one orbit on each of $X=G$ and $Y=G$, so
the recursion in the proof of Theorem~\ref{thm:master} applies.

We verify that every equality piece produced by the recursion can be chosen to
be the indicator of an \emph{open} coset. The row vectors retain the form
$u_x=R_xu_e$ throughout. If a split at a node is first constructed at a
column $y_0$, Lemma~\ref{lem:orbit-split} transports a child vector as
$v^{(j)}_{gy_0}=R_gw_j$.
Writing $y=gy_0$ gives
$$
 v^{(j)}_y=R_{yy_0^{-1}}w_j
          =R_y\bigl(R_{y_0^{-1}}w_j\bigr),
$$
so on every child the column vectors again have the form $v_y=R_yw$ for a
fixed vector $w$. Hence every real kernel occurring at a node has the form
$$
 A(x,y)=\langle R_xu_e,R_yw\rangle
       =\langle u_e,R_{x^{-1}y}w\rangle
       =g(x^{-1}y),
$$
where $g$ is a coefficient of the same strongly continuous orthogonal
representation (equivalently, after complexification, of a strongly
continuous unitary representation), and $A$ is $\delta$-almost
integer-valued with $\delta<1/32$.

There is one further point specific to the regular action. At every node there
is exactly one column orbit. Therefore, if $\Dred(A)>T(\rho,\delta)$, the
orbit-processing loop in Lemma~\ref{lem:eq-step} processes that unique orbit
and leaves \emph{no} residual term $F$. If instead
$\Dred(A)\le T(\rho,\delta)$, no split occurs and the entire rounded kernel
is terminal. Thus every nonzero residual contribution in the abstract
recursion occurs here only at a terminal node.

At such a terminal node put $k=g_\Z$ and $H=\PerR(k)$. Lemma~\ref{lem:period-open}
shows that $H$ is open. By Lemma~\ref{lem:dup-coset}, the distinct columns of
the rounded kernel are indexed by the cosets $aH$, and
$$
 \Dred(M_k)=\sum_{aH\in G/H}|k(a)|.
$$
This quantity is at most the relevant threshold, so only finitely many cosets
have nonzero value and
$$
 k=\sum_{aH\in G/H} k(a)\,\1_{aH}.
$$
Expanding each nonzero integer coefficient into $|k(a)|$ copies with the
appropriate sign gives an exact signed decomposition into indicators of
cosets of the open subgroup $H$. Its number of terms is
$\Dred(M_k)$, so it is no larger than the $2T(\rho,\delta)$ terminal cost
used in the abstract master-theorem count. Consequently the same recursion
bound gives at most $2^{C\gamma^4}$ open-coset terms. Since
$\gamma\le2M$, this is at most $2^{C' M^4}$ after changing the absolute
constant. Hence
$$
 f=\sum_{i=1}^L \sigma_i\1_{a_iH_i},
 \qquad L\le2^{C'M^4},
$$
with every $H_i$ open.
\end{proof}

For finite groups $G$, the Fourier algebra $A(G)$ and Fourier--Stieltjes
algebra $B(G)$ coincide isometrically (every function on a finite group is a
coefficient function, and the two norms agree), giving the following clean
statement, free of any topological caveats since every subgroup of a finite
group is (trivially) open.

\begin{corollary}[Finite groups]
Let $G$ be finite and $f:G\to\Z$ satisfy $\|f\|_{A(G)}\le\gamma$. Then
$$
f = \sum_{i=1}^L \sigma_i \1_{a_iH_i}, \qquad L\le 2^{O(\gamma^4)} .
$$
\end{corollary}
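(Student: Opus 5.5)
This corollary follows directly from Theorem~\ref{thm:group}. The plan is to identify $A(G)$ with $B(G)$ isometrically for finite $G$, then check that the topological conclusion is vacuous.

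\textbf{Step 1: $A(G)=B(G)$ isometrically.} For finite $G$ with counting Haar measure, $G$ is compact, so every unitary representation is a direct sum of irreducibles. Each irreducible occurs in the left regular representation $\lambda$ on $\ell^2(G)$. A coefficient $\langle\pi(g)\xi,\eta\rangle$ of an arbitrary $\pi$ decomposes over isotypic components, and each component's coefficient can be rewritten as a coefficient of $\lambda$. Compactness (indeed, amenability suffices) then gives $B(G)=A(G)$ with equal norms. The quickest route is to cite Eymard's result \cite{Eymard} that $A(G)=B(G)$ isometrically whenever $G$ is compact.

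Concretely, both norms equal the dual norm to $C^*(G)=C^*_\lambda(G)$, which coincide for finite $G$ because the full and reduced group $C^*$-algebras agree, both being the group algebra $\C[G]$ with its unique $C^*$-norm. Either way, $\|f\|_{B(G)}=\|f\|_{A(G)}\le\gamma$. Also, every function on $G$ lies in $A(G)$: the regular representation contains all matrix coefficients, and in any case $f=f*\delta_e$ is of the form $\xi*\check\eta$. So $f\in B(G)$.

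\textbf{Step 2: apply the theorem.} Since $f$ is integer-valued with $\|f\|_{B(G)}\le\gamma$, Theorem~\ref{thm:group} gives open subgroups $H_i$, elements $a_i$, and signs $\sigma_i$ with
$$
f=\sum_{i=1}^L\sigma_i\1_{a_iH_i},\qquad L\le 2^{C\gamma^4}.
$$
The openness of the $H_i$ is automatic in the discrete topology, so every subgroup qualifies and no caveat remains. If $\gamma<1$, then $f=0$, because a nonzero integer-valued $f$ has $\|f\|_{A(G)}\ge\|f\|_\infty\ge1$, and the empty sum works.

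\textbf{Main obstacle.} The only nontrivial point is the norm identification in Step 1. I would handle it by citation rather than by reproving it. If a self-contained argument is wanted, use the fact that for finite $G$ the full and reduced $C^*$-norms agree on $\C[G]$, and that both $A(G)$ and $B(G)$ norms are computed by duality against them.
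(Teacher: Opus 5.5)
Your proposal is correct and follows the paper's route: identify $A(G)$ with $B(G)$ isometrically for finite $G$, apply Theorem~\ref{thm:group}, and note that openness is automatic because every subgroup of a finite group is open. One small slip, harmless since your $G$ is finite: the parenthetical ``amenability suffices'' is false for the set equality, because $A(G)=B(G)$ holds exactly when $G$ is compact (for noncompact amenable $G$ the constant function $1$ lies in $B(G)$ but not in $A(G)\subseteq C_0(G)$), and in general only the isometric inclusion $A(G)\subseteq B(G)$ holds.
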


\section{Abelian consequences, and a comparison with earlier bounds}
\label{sec:comparison}

We now specialize once more, from an arbitrary locally compact group to a
locally compact \emph{abelian} group, to recover the classical Green--Sanders
theorem, and take the opportunity to place our bound alongside other
quantitative bounds available in the abelian literature.

Let $K$ be a locally compact abelian group and $\mu\in M(K)$ an idempotent
measure, meaning $\mu*\mu=\mu$. Taking Fourier transforms, this says
$\widehat\mu^2=\widehat\mu$ on the dual group $\widehat K$, i.e.\
$\widehat\mu$ is Boolean-valued. Under the standard isometric identification
$B(\widehat K)\cong M(K)$ (Fourier--Stieltjes transform), we have
$\|\widehat\mu\|_{B(\widehat K)}=\|\mu\|$. Applying Theorem~\ref{thm:group}
to the group $\widehat K$ and the integer-valued (indeed Boolean-valued)
function $\widehat\mu\in B(\widehat K)$ gives Corollary~\ref{cor:abelian} at once.

We next compare the quantitative bounds with the earlier results; the different
settings are not directly comparable in every respect.

\begin{itemize}
\item Green and Sanders's original theorem \cite{GS}, valid for arbitrary
locally compact abelian $K$, gave the qualitatively weaker bound $L\le
\exp\exp(C\|\mu\|^4)$: a tower of height two, rather than a single
exponential. Theorem~\ref{thm:group} removes one level of the tower, in a
setting that is in fact more general at the group level, since
Theorem~\ref{thm:group} allows arbitrary locally compact groups.

\item In the more restrictive setting of integer-valued functions on
\emph{finite abelian} groups, Sanders \cite{SandersA} summarizes his bound as
$\exp(M^{4+o(1)})$. An $\exp(O(M^4))$ estimate removes the $M^{o(1)}$
loss in the exponent. On the specific model group $\mathbb F_2^n$, however, Sanders's
$\exp(M^{3+o(1)})$ bound \cite{SandersF2} remains substantially sharper.
Our main point is that the same argument works without abelian-specific
input and gives a single-exponential bound for every locally compact group.

\item The Green--Sanders argument also keeps track of the number of
\emph{distinct} subgroups appearing across the whole decomposition, a finer
invariant than the total number $L$ of coset terms
(which allows repetitions). Our recursion, as it stands, only controls $L$;
different leaves of our recursion tree can and typically will produce
unrelated period subgroups, and we have no mechanism at present for
collapsing or coordinating them. We regard finding such a mechanism, in the
non-abelian setting, as an interesting question left open by this paper.

\item In the Boolean special case, Sanders's quantitative non-abelian
idempotent theorem \cite{SandersNA} for finite groups gives a triply iterated
tower bound in $O(\|f\|_{A(G)})$. Theorem~\ref{thm:group} replaces this by a
single exponential $\exp(O(\gamma^4))$, and, unlike \cite{SandersNA}, applies
verbatim to arbitrary locally compact groups, not just finite ones.

\item Sanders also obtained a related quantitative structural theorem for
Boolean functions on finite groups in terms of \emph{coset decision trees}
\cite{SandersCDT}.  If $G$ is finite and $f:G\to\{0,1\}$ satisfies
$\|f\|_{A(G)}\le M$,
then $f$ can be computed by a coset decision tree with at most
$\exp\!\bigl(\exp\!\bigl(\exp(O(M^2))\bigr)\bigr)$
leaves.  Conversely, Sanders observes that a Boolean function computed by a
coset decision tree with $m$ leaves has Fourier-algebra norm at most
$\exp(O(m))$.  Thus coset decision trees provide another quantitative
formulation of the principle that Boolean functions of small Fourier-algebra
norm are governed by coset structure.  This result controls a different
structural parameter from the signed coset-decomposition length considered
here, so the two bounds are not directly comparable.

\item For the matrix theorem, Beke, Goh, Hatami, Jaffe and Naylor
\cite{BGHJN} proved that a Boolean matrix of $\gamma_2$-norm at most
$\gamma$ is a signed sum of $2^{O(\gamma^6)}$ blocky matrices.
Corollary~\ref{cor:matrix} improves the exponent from six to four. The
integer-valued formulation is included for convenience; the generalized
almost-integer formulation in \cite{BGHJN} already encompasses integer
matrices as well.
\end{itemize}

\paragraph{Use of AI tools.}
AI-assisted tools were used solely to improve the wording, organization, and
clarity of the exposition. They were not used to generate the
mathematical arguments. 

\subsection*{Acknowledgments}
The author is grateful to Lianna Hambardzumyan for bringing this problem to her attention, to Daniel Naylor for helpful bibliographic suggestions, and to Imre Leader for his guidance and support. Part of this work was carried out during the Circuits, Communication, and Proofs program at ICTS. The author was supported by the CB European PhD Studentship, funded by Trinity College, Cambridge.

\end{document}